\theoremstyle{plain}
\newtheorem{thm}{Theorem}[section]
\newtheorem{cor}[thm]{Corollary}
\newtheorem{lem}[thm]{Lemma}
\newtheorem{prop}[thm]{Proposition}
\newtheorem{defn}[thm]{Definition}
\newtheorem{exa}[thm]{Example}
\newtheorem{rem}[thm]{Remark}
\begin{document}

\title{Graded Classical Weakly Prime Submodules over Non-Commutative Graded Rings}
\author{Jebrel M. \textsc{Habeb$^{*}$}}\thanks{This work had been done while the fist author has a sabbatical leave from Yarmouk University for the academic year  2022-2023. }\thanks {$^{*}$Corresponding Author: Jebrel M. Habeb: e-mail jhabeb@yu.edu.jo}
\address{Department of Mathematics, Yarmouk University, Irbid, Jordan}
\email{jhabeb@yu.edu.jo}

\author{Rashid \textsc{Abu-Dawwas}}
\address{Department of Mathematics, Yarmouk University, Irbid, Jordan}
\email{rrashid@yu.edu.jo}

\subjclass[2020]{Primary 16W50; Secondary 13A02, 13A15}

\keywords{Graded prime submodules; graded weakly prime submodules; graded classical prime submodules.}

\begin{abstract}
The goal of this article is to propose and examine the notion of graded classical weakly prime submodules over non-commutative graded rings which is a generalization of the concept of graded classical weakly prime submodules over commutative graded rings. We investigate the structure of these types of submodules in various categories of graded modules. 
\end{abstract}

\maketitle

\section{Introduction}

During the whole of this article, all our rings are associative with nonzero unity $1$ and all modules are unital left modules. Let $G$ be a multiplicative group with identity $e$ and $A$ be a ring with nonzero unity 1. Then $A$ is called $G$-graded if $A=\displaystyle\bigoplus_{g\in G} A_{g}$ with $A_{g}A_{h}\subseteq A_{gh}$ for all $g, h\in G$, where $A_{g}$ is an additive subgroup of $A$ for all $g\in G$, here $A_{g}A_{h}$ denotes the additive subgroup of $A$ consisting of all finite sums of elements $a_{g}b_{h}$ with $a_{g}\in A_{g}$ and $b_{h}\in A_{h}$. We denote this by $G(A)$. The elements of $A_{g}$ are called homogeneous of degree $g$. If $a\in A$, then $a$ can be written uniquely as $a=\displaystyle\sum_{g\in G}a_{g}$, where $a_{g}$ is the component of $a$ in $A_{g}$ and $a_{g}=0$ except for finitely many. The additive subgroup $A_{e}$ is in fact a subring of $A$ and $1\in A_{e}$. The set of all homogeneous elements of $A$ is $\displaystyle\bigcup_{g\in G}A_{g}$ and is denoted by $h(A)$. Let $P$ be a left ideal of a $G$-graded ring $A$. Then $P$ is called a graded left ideal if $P=\displaystyle\bigoplus_{g\in G}(P\cap A_{g})$, i.e., for $a\in P$, $a=\displaystyle\sum_{g\in G}a_{g}$ where $a_{g}\in P$ for all $g\in G$. A left ideal of a graded ring is not necessarily a graded left ideal, see (\cite{Alshehry Dawwas}, Example 1.1).

Let $A$ be a $G$-graded ring. A left $A$-module $M$ is called $G$-graded if $M=\displaystyle\bigoplus_{g\in G} M_{g}$ with $A_{g}M_{h}\subseteq M_{gh}$ for all $g, h\in G$, where $M_{g}$ is an additive subgroup of $M$ for all $g\in G$. The elements of $M_{g}$ are called homogeneous of degree $g$. If $m\in M$, then $m$ can be written uniquely as $m=\displaystyle\sum_{g\in G}m_{g}$, where $m_{g}$ is the component of $m$ in $M_{g}$ and $m_{g}=0$ except for finitely many. Certainly, $M_{g}$ is an $A_{e}$-module for all $g\in G$. The set of all homogeneous elements of $M$ is $\displaystyle\bigcup_{g\in G}M_{g}$ and is denoted by $h(M)$. Let $K$ be an $A$-submodule of $M$. Then $K$ is called a graded submodule of $M$ if $K=\displaystyle\bigoplus_{g\in G}(K\cap M_{g})$, i.e., for $x\in K$, $x=\displaystyle\sum_{g\in G}x_{g}$ where $x_{g}\in K$ for all $g\in G$. Similar to the case of graded ideals, an $A$-submodule of a graded $A$-module is not necessarily a graded submodule. For more details and terminologies, see \cite{Hazart, Nastasescue}.

Graded prime ideals of commutative graded rings have been introduced and studied in \cite{Refai Hailat Obiedat}. A proper graded ideal $P$ of a commutative graded ring $A$ is said to be graded prime if whenever $x, y\in h(A)$ such that $xy\in P$, then either $x\in P$ or $y\in P$. The concept of graded prime ideals and its generalizations have an outstanding location in graded commutative algebra. They are valuable tools to determine the properties of graded commutative rings. Numerous generalizations of graded prime ideals have been investigated. Indeed, Atani introduced in \cite{Atani} the concept of graded weakly prime ideals. A proper graded ideal $P$ of a commutative graded ring $A$ is said to be a graded weakly prime ideal if whenever $x, y\in h(A)$ such that $0\neq xy\in P$, then $x\in P$ or $y\in P$. By (\cite{Atani}, Theorem 2.12), the following statements are equivalent for a graded ideal $P$ of $G(A)$ with $P\neq A$, where $A$ is a commutative graded ring:

\begin{enumerate}
\item $P$ is a graded weakly prime ideal of $G(A)$.

\item For each $g, h\in G$, the inclusion $0\neq IJ\subseteq P$ with $A_{e}$-submodules $I$ of $A_{g}$ and $J$ of $A_{h}$ implies that $I\subseteq P$ or $J\subseteq P$.
\end{enumerate}

For graded rings that are not necessarily commutative, it is clear that (2) does not imply (1). So, Alshehry and Abu-Dawwas in \cite{Alshehry Dawwas} defined a graded left ideal of $G(A)$, where $A$ need not be commutative to be a graded weakly prime as follows: a graded left ideal $P$ of $G(A)$ with $P\neq A$ is said to be a graded weakly prime ideal of $G(A)$ if for each $g, h\in G$, the inclusion $0\neq IJ\subseteq P$ with $A_{e}$-submodules $I$ of $A_{g}$ and $J$ of $A_{h}$ implies that $I\subseteq P$ or $J\subseteq P$. Equivalently, whenever $x, y\in h(R)$ such that $0\neq xRy\subseteq P$, then either $x\in P$ or $y\in P$ (\cite{Alshehry Dawwas}, Proposition 2.3). In \cite{Dawwas Bataineh Muanger}, the standard definition of a graded prime ideal $P$ for a graded non-commutative ring $A$ is that $P\neq A$ and whenever $I$ and $J$ are graded left ideals of $A$ such that $IJ\subseteq P$, then either $I\subseteq P$ or $J\subseteq P$. Accordingly, in \cite{Alshehry Dawwas}, they defined a graded left ideal of a graded ring $A$ to be a graded weakly prime as follows: a proper graded left ideal $P$ of $A$ is said to be a graded weakly prime ideal of $A$ if whenever $I$ and $J$ are graded left ideals of $A$ such that $0\neq IJ\subseteq P$, then either $I\subseteq P$ or $J\subseteq P$.

Let $K$ be a graded $A$-submodule of a left $A$-module $M$. Then $(K:_{A}M)=\left\{a\in A:aM\subseteq K\right\}$ is a graded two-sided ideal of $A$ \cite{Farzalipour}. $K$ is said to be faithful if $Ann_{A}(K)=(0:_{A}K)=0$. In \cite{Dawwas Bataineh Muanger}, a proper graded $A$-submodule $K$ of a graded $A$-module $M$ over non-commutative graded ring $A$ is said to be graded prime if whenever $L$ is a graded $A$-submodule of $M$ and $I$ is a graded ideal of $A$ such that $IL\subseteq K$, then either $L\subseteq K$ or $I\subseteq(K:_{A}M)$. If $A$ is commutative, this definition is equivalent to: a proper graded $A$-submodule $K$ of a graded $A$-module $M$ is said to be graded prime if whenever $a\in h(A)$ and $x\in h(M)$ are such that $ax\in K$, then either $x\in K$ or $a\in(K :_{A} M)$. In a similar way, a proper graded $A$-submodule $K$ of a graded $A$-module $M$ over a non-commutative graded ring $A$ is said to be graded weakly prime if whenever $L$ is a graded $A$-submodule of $M$ and $I$ is a graded ideal of $A$ such that $0\neq IL\subseteq K$, then either $L\subseteq K$ or $I\subseteq(K:_{A}M)$. If $A$ is commutative, this definition is equivalent to: a proper graded $A$-submodule $K$ of a graded $A$-module $M$ is said to be graded weakly prime if whenever $a\in h(A)$ and $x\in h(M)$ are such that $0\neq ax\in K$, then either $x\in K$ or $a\in(K :_{A} M)$. Let $K$ be a graded $A$-submodule of $M$ and $g\in G$ such that $K_{g}\neq M_{g}$. Then $K$ is said to be a $g$-prime $A$-submodule of $M$ if whenever $L$ is an $A_{e}$-submodule of $M_{g}$ and $I$ is an ideal of $A_{e}$ such that $IL\subseteq K$, then either $L\subseteq K$ or $I\subseteq(K:_{A}M)$. 

The concept of graded weakly classical prime submodules over commutative graded rings has been proposed and studied by Abu-Dawwas and Al-Zoubi in \cite{Dawwas Zoubi}. A proper graded $A$-submodule $K$ of $M$ over a commutative graded ring $A$ is said to be a graded weakly classical prime if whenever $x, y\in h(A)$ and $m\in h(M)$ such that $0\neq xym\in K$, then either $xm\in K$ or $ym\in K$. In this article, we introduce and examine the concept of graded classical weakly prime submodules over non-commutative graded rings. Indeed, this article is motivated by the concepts and the techniques that have been examined in \cite{Jamali}. We propose the following: a proper graded $A$-submodule $K$ of $M$ over a non-commutative graded ring $A$ is said to be a graded classical weakly prime if whenever $x, y\in h(A)$ and $L$ is a graded $A$-submodule of $M$ such that $0\neq xAyL\subseteq K$, then either $xL\subseteq K$ or $yL\subseteq K$. Several properties have been examined. Also, we investigate the structure of graded classical weakly prime submodules in various categories of graded modules.

\section{Graded Classical Weakly Prime Submodules}

In this section, we introduce and examine graded classical weakly prime submodules over non-commutative graded rings.

\begin{defn}
Let $A$ be a graded ring, $M$ be a graded $A$-module, $K$ be a graded $A$-submodule of $M$ and $g\in G$. Then 
\begin{enumerate}
    \item $K$ is said to be a graded classical weakly prime $A$-submodule of $M$ if $K\neq M$ and whenever $x, y\in h(A)$ and $L$ is a graded $A$-submodule of $M$ such that $0\neq xAyL\subseteq K$, then either $xL\subseteq K$ or $yL\subseteq K$.
    \item $K$ is said to be a graded completely classical weakly prime $A$-submodule of $M$ if $K\neq M$ and whenever $x, y\in h(A)$ and $z\in h(M)$ such that $0\neq xyz\in K$, then either $xz\in K$ or $yz\in K$.
    \item $K$ is said to be a $g$-classical weakly prime $A$-submodule of $M$ if $K_{g}\neq M_{g}$ and whenever $x, y\in A_{e}$ and $L$ is an $A_{e}$-submodule of $M_{g}$ such that $0\neq xA_{e}yL\subseteq K$, then either $xL\subseteq K$ or $yL\subseteq K$.
\end{enumerate}
\end{defn}

Evidently, if $A$ is a commutative graded ring with unity, then the concepts of graded classical weakly prime submodules and graded completely classical weakly prime submodules coincide. Indeed, the next example demonstrates that this will not be the case for non-commutative graded rings:

\begin{exa}\label{1}
Consider $A=M=M_{2}(\mathbb{Z})$ (The $2\times2$ matrices over the ring of integers $\mathbb{Z}$) and $G=\mathbb{Z}_{4}$ (The additive group of integers modulo 4). Then $A$ is $G$-graded by $A_{0}=\left(\begin{array}{cc}
        \mathbb{Z} & 0 \\
        0 & \mathbb{Z} 
      \end{array}\right)$, $A_{2}=\left(\begin{array}{cc}
        0 & \mathbb{Z} \\
        \mathbb{Z} & 0 
      \end{array}\right)$ and $A_{1}=A_{3}=\left(\begin{array}{cc}
        0 & 0 \\
        0 & 0 
      \end{array}\right)$. $M$ also is a $G$-graded left $A$-module by the same graduation of $A$. Then $K=M_{2}(2\mathbb{Z})$ is a graded prime $A$-submodule of $M$, and hence $K$ is a graded classical weakly prime $A$-submodule of $M$. On the other hand, $K$ is not a graded completely classical weakly prime $A$-submodule of $M$ since $x=\left(\begin{array}{cc}
        1 & 0 \\
        0 & 2 
      \end{array}\right)\in h(A)$, $y=\left(\begin{array}{cc}
        0 & 1 \\
        1 & 0 
      \end{array}\right)\in h(A)$ and $z=\left(\begin{array}{cc}
        1 & 0 \\
        0 & 4 
      \end{array}\right)\in h(M)$ such that $0\neq xyz\in K$, $xz\notin K$ and $yz\notin K$.
\end{exa}

Clearly, the zero submodule is always graded classical weakly prime and $g$-classical weakly prime for all $g\in G$ by definition. The next example shows that one can find $g\in G$ such that the zero submodule is not a $g$-prime. 

\begin{exa}
Consider $A=M=M_{2}(\mathbb{Z})$ and $G=\mathbb{Z}_{4}$. Then $A$ is $G$-graded by $A_{0}=\left(\begin{array}{cc}
        \mathbb{Z} & 0 \\
        0 & \mathbb{Z} 
      \end{array}\right)$, $A_{2}=\left(\begin{array}{cc}
        0 & \mathbb{Z} \\
        \mathbb{Z} & 0 
      \end{array}\right)$ and $A_{1}=A_{3}=\left(\begin{array}{cc}
        0 & 0 \\
        0 & 0 
      \end{array}\right)$. $M$ also is a $G$-graded left $A$-module by the same graduation of $A$. Choose $x=\left(\begin{array}{cc}
        0 & 0 \\
        0 & 1 
      \end{array}\right)\in A_{0}$. Then $I=A_{0}xA_{0}=\left(\begin{array}{cc}
        0 & 0 \\
        0 & \mathbb{Z} 
      \end{array}\right)$ is an ideal of $A_{0}$. Choose $y=\left(\begin{array}{cc}
        0 & 1 \\
        0 & 0 
      \end{array}\right)\in M_{2}$. Then $L=A_{0}y=\left(\begin{array}{cc}
        0 & \mathbb{Z} \\
        0 & 0 
      \end{array}\right)$ is an $A_{0}$-submodule of $M_{2}$. Consider the graded $A$-submodule $K=\left(\begin{array}{cc}
        0 & 0 \\
        0 & 0 
      \end{array}\right)$ of $M$. Then $K_{2}=\left(\begin{array}{cc}
        0 & 0 \\
        0 & 0 
      \end{array}\right)\neq M_{2}$ and $IL\subseteq K$, but $L\nsubseteq K$ and $I\nsubseteq (K:_{A}M)$. Hence, $K$ is not a $2$-prime $A$-submodule of $M$. 
\end{exa}

\begin{thm}\label{Theorem 2.1}
Let $M$ be a graded $A$-module, $K$ be a graded $A$-submodule of $M$ and $g\in G$ such that $K_{g}\neq M_{g}$ and every $A_{e}$-submodule of $M_{g}$ is faithful. Then $K$ is a $g$-classical weakly prime $A$-submodule of $M$ if and only if whenever $I, J$ are ideals of $A_{e}$ and $L$ is an $A_{e}$-submodule of $M_{g}$ with $0\neq IJL\subseteq K$, then either $IL\subseteq K$ or $JL\subseteq K$. 
\end{thm}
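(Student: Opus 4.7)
For the easy direction $(\Leftarrow)$, given $x,y\in A_{e}$ and an $A_{e}$-submodule $L$ of $M_{g}$ with $0\neq xA_{e}yL\subseteq K$, I would take the two-sided ideals $I=A_{e}xA_{e}$ and $J=A_{e}yA_{e}$ of $A_{e}$, which contain $x$ and $y$ respectively since $1\in A_{e}$. Then $IJL=A_{e}xA_{e}yA_{e}L\subseteq A_{e}(xA_{e}yL)\subseteq A_{e}K\subseteq K$, using $A_{e}L\subseteq L$ and that $K$ is an $A$-submodule, while $IJL\supseteq xA_{e}yL\neq 0$. The hypothesis therefore gives $IL\subseteq K$ or $JL\subseteq K$, which specialises to $xL\subseteq K$ or $yL\subseteq K$.

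For $(\Rightarrow)$, I would assume $K$ is $g$-classical weakly prime, let $I,J$ be ideals of $A_{e}$ and $L\subseteq M_{g}$ an $A_{e}$-submodule with $0\neq IJL\subseteq K$, and argue by contradiction that both $IL\not\subseteq K$ and $JL\not\subseteq K$. The plan is to introduce the two-sided ideals $\mathcal{X}=\{x\in I:xL\subseteq K\}$ and $\mathcal{Y}=\{y\in J:yL\subseteq K\}$ of $A_{e}$ (routine to check, using that $K$ is an $A$-submodule and $L$ is $A_{e}$-stable), which are proper in $I$ and $J$ by hypothesis, and to fix $x^{*}\in I\setminus\mathcal{X}$ and $y^{*}\in J\setminus\mathcal{Y}$.

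The core of the argument is the observation that whenever $x\in I\setminus\mathcal{X}$ and $y\in J\setminus\mathcal{Y}$, one has $xA_{e}yL\subseteq IJL\subseteq K$; if $xA_{e}yL\neq 0$ then $g$-classical weak primeness forces $xL\subseteq K$ or $yL\subseteq K$, contradicting the choice of $x,y$, so $xA_{e}yL=0$, and faithfulness of $L$ (every $A_{e}$-submodule of $M_{g}$ is faithful, by hypothesis) upgrades this to $xA_{e}y=0$ in $A_{e}$. I would then extend this annihilation to all $y\in J$ by a translation trick: for $y\in\mathcal{Y}$ the element $y+y^{*}$ lies in $J\setminus\mathcal{Y}$ (else $y^{*}=(y+y^{*})-y$ would lie in $\mathcal{Y}$), so $xA_{e}(y+y^{*})=0$ for $x\in I\setminus\mathcal{X}$; subtracting $xA_{e}y^{*}=0$ yields $xA_{e}y=0$. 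Thus $xA_{e}J=0$ for every $x\in I\setminus\mathcal{X}$, and applying the symmetric translation by $x^{*}$ on the $I$-side gives $IA_{e}J=0$. Since $1\in A_{e}$ this forces $IJ=0$, hence $IJL=0$, contradicting $IJL\neq 0$.

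The main obstacle is this translation step: the set differences $I\setminus\mathcal{X}$ and $J\setminus\mathcal{Y}$ are not closed under addition, so pointwise annihilations cannot be summed directly into an ideal-level statement. Fixing $x^{*},y^{*}$ outside the exceptional subideals bypasses this, but it is essential that faithfulness converts the module-level identity $xA_{e}yL=0$ into the ring-level identity $xA_{e}y=0$, where the additive manipulation underlying the translation is legitimate.
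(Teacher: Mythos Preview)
Your proposal is correct and follows essentially the same route as the paper: the forward direction is the same contradiction argument with the same translation trick (your ``extend over $J$, then over $I$'' organisation simply repackages the paper's four-case analysis, and your use of faithfulness to pass from $xA_{e}yL=0$ to $xA_{e}y=0$ is the paper's step $rsL=0\Rightarrow rs=0$). Your converse is in fact slightly more careful than the paper's, since you take the genuine two-sided ideals $A_{e}xA_{e}$ and $A_{e}yA_{e}$, whereas the paper uses the left ideals $A_{e}x$ and $A_{e}y$.
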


\begin{proof}
Suppose that $K$ is a $g$-classical weakly prime $A$-submodule of $M$. Assume that $I, J$ are ideals of $A_{e}$ and $L$ is an $A_{e}$-submodule of $M_{g}$ with $0\neq IJL\subseteq K$. Suppose that $IL\nsubseteq K$ and $JL\nsubseteq K$. Then there exist $r\in I$ and $s\in J$ such that $rL\nsubseteq K$ and $sL\nsubseteq K$, and then $rA_{e}sL\subseteq IJL\subseteq K$, which implies that $rA_{e}sL=0$ as $K$ is $g$-weakly classical prime. So, $rsL=0$, and hence $rs=0$. Indeed, we show that $IJ=0$. Let $a\in I$ and $b\in J$. If $aL\nsubseteq K$ and $bL\nsubseteq K$, then $ab=0$ as above. If $aL\nsubseteq K$ and $bL\subseteq K$, then $(s+b)L\nsubseteq K$, and then $a(s+b)=0$. Since $as=0$, $ab=0$. Similarly, if $aL\subseteq K$ and $bL\nsubseteq K$, then $ab=0$. If $aL\subseteq K$ and $bL\subseteq K$, then $(r+a)L\nsubseteq K$ and $(s+b)L\nsubseteq K$, and then $(r+a)s=r(s+b)=(r+a)(s+b)=rs=0$, which gives that $ab=0$. Thus $IJ=0$, and then $IJL=0$, which is a contradiction. Hence, either $IL\subseteq K$ or $JL\subseteq K$. Conversely, let $x, y\in A_{e}$ and $L$ be an $A_{e}$-submodule $M_{g}$ with $0\neq xA_{e}yL\subseteq K$. Then $I=A_{e}x$ and $J=A_{e}y$ are ideals of $A_{e}$ with $0\neq IJL\subseteq K$, and then either $IL\subseteq K$ or $JL\subseteq K$ by assumption, and hence either $xL\subseteq K$ or $yL\subseteq K$. Thus $K$ is a $g$-classical weakly prime $A$-submodule of $M$.
\end{proof}

\begin{cor}\label{Corollary 2.2}
Let $M$ be a graded $A$-module, $g\in G$ such that every $A_{e}$-submodule of $M_{g}$ is faithful, and $K$ be a $g$-classical weakly prime $A$-submodule of $M$. Suppose that $L$ is an $A_{e}$-submodule of $M_{g}$, $x\in A_{e}$ and $I$ is an ideal of $A_{e}$.
\begin{enumerate}
    \item If $0\neq xIL\subseteq K$, then either $xL\subseteq K$ or $IL\subseteq K$.
    \item If $0\neq IxL\subseteq K$, then either $xL\subseteq K$ or $IL\subseteq K$.
\end{enumerate}
\end{cor}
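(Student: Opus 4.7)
The plan is to deduce both parts from Theorem \ref{Theorem 2.1} by replacing the single element $x$ with the two-sided ideal it generates in $A_e$, namely $I_x := A_e x A_e$. Since $1\in A_e$, we always have $x\in I_x$, and $I_x$ is a genuine two-sided ideal of $A_e$. This lets us massage each hypothesis into the form required by the characterization ``$0\neq IJL\subseteq K \Rightarrow IL\subseteq K$ or $JL\subseteq K$'' supplied by Theorem \ref{Theorem 2.1}.

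For part (1), I would take the pair of ideals $I_x = A_e x A_e$ and $I$. The containment $I_x\cdot I\cdot L\subseteq K$ is the first step: since $I$ is a (two-sided) ideal of $A_e$, $A_e I\subseteq I$, so $A_e x A_e I L\subseteq A_e x I L$; and since $K$ is an $A$-submodule, $A_e(xIL)\subseteq A_e K\subseteq K$. Next, nonvanishing: because $1\in A_e$, $I_x I L\supseteq x I L\neq 0$. Now Theorem \ref{Theorem 2.1} applies and forces $I_x L\subseteq K$ or $IL\subseteq K$. In the second case we are done, and in the first case $xL = 1\cdot x\cdot 1\cdot L\subseteq I_x L\subseteq K$.

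Part (2) is the mirror image with essentially the same bookkeeping, taking the ideals $I$ and $I_x = A_e x A_e$ in that order. The key inclusion is $I\cdot I_x\cdot L\subseteq K$: since $I$ is an ideal, $I A_e\subseteq I$, so $I A_e x A_e L\subseteq I x A_e L$; since $L$ is an $A_e$-submodule, $A_e L\subseteq L$, so $I x A_e L\subseteq I x L\subseteq K$. Again $I\cdot I_x\cdot L\supseteq IxL\neq 0$, and Theorem \ref{Theorem 2.1} yields $IL\subseteq K$ or $I_x L\subseteq K$, with the latter again giving $xL\subseteq K$.

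There is no real obstacle here; the only thing to be careful about is keeping track of the two-sided ideal property of $I$ on both sides (so that $A_e I$ and $I A_e$ are each absorbed into $I$) and of the $A_e$-submodule property of $L$ (so that $A_e L\subseteq L$), together with the use of $1\in A_e$ both to push $x$ into $I_x$ and to verify the nonvanishing of the triple product. Everything else is a direct appeal to Theorem \ref{Theorem 2.1}.
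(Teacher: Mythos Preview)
Your argument is correct and is exactly the intended route: the paper states Corollary \ref{Corollary 2.2} without proof as an immediate consequence of Theorem \ref{Theorem 2.1}, and replacing $x$ by the two-sided ideal $A_{e}xA_{e}$ it generates is precisely the natural reduction. Your care in using $A_{e}xA_{e}$ rather than the one-sided $A_{e}x$ (which the paper uses in the converse of Theorem \ref{Theorem 2.1}) is in fact the cleaner choice in the non-commutative setting.
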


\begin{thm}\label{Theorem 2.2 (1)}
Let $M$ be a graded $A$-module and $K$ be a $g$-classical weakly prime $A$-submodule of $M$. If $L$ is a faithful $A_{e}$-submodule of $M_{g}$ with $L\nsubseteq K_{g}$, then $(K_{g}:_{A_{e}}L)$ is a weakly prime left ideal of $A_{e}$. 
\end{thm}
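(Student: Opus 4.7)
The plan is to mimic the ideal-level argument in the proof of Theorem \ref{Theorem 2.1}, but with the blanket faithfulness hypothesis replaced by faithfulness of the single submodule $L$. First, I would observe that $(K_{g}:_{A_{e}}L)$ is a proper two-sided ideal of $A_{e}$: properness follows because $L = 1 \cdot L \nsubseteq K_{g}$ forces $1 \notin (K_{g}:_{A_{e}}L)$, and the two-sided property is immediate from the fact that both $K$ and $L$ are stable under multiplication by elements of $A_{e}$.

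Next, I would take left ideals $I, J$ of $A_{e}$ with $0\neq IJ\subseteq (K_{g}:_{A_{e}}L)$ and try to conclude $IL\subseteq K_{g}$ or $JL\subseteq K_{g}$. From $IJ\subseteq (K_{g}:_{A_{e}}L)$ we have $IJL\subseteq K_{g}\subseteq K$, and I would use faithfulness of $L$ to promote $IJ\neq 0$ to $IJL\neq 0$: otherwise $IJ\subseteq \mathrm{Ann}_{A_{e}}(L)=0$, a contradiction. Supposing for contradiction that $IL\nsubseteq K$ and $JL\nsubseteq K$, I would choose witnesses $r\in I$ and $s\in J$ with $rL\nsubseteq K$ and $sL\nsubseteq K$. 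Since $rA_{e}sL\subseteq IJL\subseteq K$ and $K$ is $g$-classical weakly prime, this product cannot be nonzero without yielding $rL\subseteq K$ or $sL\subseteq K$, so $rA_{e}sL=0$. Taking $1\in A_{e}$ gives $rsL=0$, and faithfulness of $L$ then upgrades this to $rs=0$.

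Finally, I would reprise the replacement trick from the end of the proof of Theorem \ref{Theorem 2.1}: for arbitrary $a\in I$ and $b\in J$, I would replace $r$ by $r+a\in I$ (when $aL\subseteq K$) and/or $s$ by $s+b\in J$ (when $bL\subseteq K$), noting that these substitutions preserve the property that the image under $L$ is not contained in $K$, and that they require only the additive subgroup structure of $I$ and $J$. Running the four cases as in Theorem \ref{Theorem 2.1} produces $ab=0$ in each, so $IJ=0$, contradicting $IJ\neq 0$. Hence $IL\subseteq K$ or $JL\subseteq K$, i.e., $I\subseteq (K_{g}:_{A_{e}}L)$ or $J\subseteq (K_{g}:_{A_{e}}L)$, so $(K_{g}:_{A_{e}}L)$ is weakly prime. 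The main obstacle is verifying that every use of faithfulness in the replacement trick is really applied to $L$ itself (never to an auxiliary submodule) and that every ideal manipulation—in particular $rA_{e}s\subseteq IJ$ and $r+a\in I$, $s+b\in J$—uses only the left-ideal axioms; once this bookkeeping is in place, the argument is a direct translation of the proof of Theorem \ref{Theorem 2.1} to the quotient-like object $(K_{g}:_{A_{e}}L)$.
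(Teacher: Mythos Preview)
Your proposal is correct and follows essentially the same route as the paper: the paper simply invokes Theorem~\ref{Theorem 2.1} to obtain $IL\subseteq K$ or $JL\subseteq K$ from $0\neq IJL\subseteq K$, and then intersects with $M_{g}$, whereas you unpack the proof of Theorem~\ref{Theorem 2.1} inline with the replacement trick. Your inlining is in fact the more careful version, since Theorem~\ref{Theorem 2.1} as stated assumes \emph{every} $A_{e}$-submodule of $M_{g}$ is faithful (not just $L$), and one must inspect its proof to see that only the faithfulness of $L$ is actually used---a point you make explicit.
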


\begin{proof}
Clearly, $(K_{g}:_{A_{e}}L)$ is a left ideal of $A_{e}$, and since $L\nsubseteq K_{g}$, $(K_{g}:_{A_{e}}L)$ is a proper ideal of $A_{e}$. Let $I, J$ be two ideals of $A_{e}$ such that $0\neq IJ\subseteq (K_{g}:_{A_{e}}L)$. Then $0\neq IJL\subseteq K_{g}\subseteq K$, and then by Theorem \ref{Theorem 2.1}, either $IL\subseteq K$ or $JL\subseteq K$. On the other hand, $IL\subseteq A_{e}M_{g}\subseteq M_{g}$, and similarly, $JL\subseteq M_{g}$. So, either $IL\subseteq K\bigcap M_{g}=K_{g}$ or $JL\subseteq K\bigcap M_{g}=K_{g}$, and thus either $I\subseteq (K_{g}:_{A_{e}}L)$ or $J\subseteq (K_{g}:_{A_{e}}L)$. Hence, $(K_{g}:_{A_{e}}L)$ is a weakly prime left ideal of $A_{e}$.  
\end{proof}

Similarly, one can prove the following:

\begin{thm}\label{Theorem 2.2 (1) (1)}
Let $M$ be a graded $A$-module and $K$ be a $g$-classical weakly prime $A$-submodule of $M$. If $L$ is a faithful $A_{e}$-submodule of $M_{g}$ with $L\nsubseteq K$, then $(K:_{A_{e}}L)$ is a weakly prime left ideal of $A_{e}$. 
\end{thm}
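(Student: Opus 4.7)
The plan is to mirror the proof of Theorem \ref{Theorem 2.2 (1)}, replacing $K_{g}$ by $K$ throughout. In fact, because $L\subseteq M_{g}$ and any $a\in A_{e}$ satisfies $aL\subseteq M_{g}$, one has $aL\subseteq K$ iff $aL\subseteq K\cap M_{g}=K_{g}$, so $(K:_{A_{e}}L)=(K_{g}:_{A_{e}}L)$ and the hypotheses $L\nsubseteq K$ and $L\nsubseteq K_{g}$ coincide. Thus the statement is essentially a reformulation of Theorem \ref{Theorem 2.2 (1)}; I will nevertheless sketch the direct argument.

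First, $(K:_{A_{e}}L)$ is clearly a left ideal of $A_{e}$, and it is proper: since $L\nsubseteq K$, we have $1\cdot L=L\nsubseteq K$, so $1\notin(K:_{A_{e}}L)$.

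Next, I would take ideals $I,J$ of $A_{e}$ with $0\neq IJ\subseteq(K:_{A_{e}}L)$, whence $IJL\subseteq K$. The key preparatory step is to verify $IJL\neq0$: if $IJL=0$, then $IJ\subseteq Ann_{A}(L)=0$ by the assumed faithfulness of $L$, contradicting $IJ\neq 0$. With $0\neq IJL\subseteq K$ in hand, I would apply Theorem \ref{Theorem 2.1} to conclude that either $IL\subseteq K$ or $JL\subseteq K$, i.e., $I\subseteq(K:_{A_{e}}L)$ or $J\subseteq(K:_{A_{e}}L)$, which gives weak primeness of $(K:_{A_{e}}L)$.

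The main subtlety is nominal: Theorem \ref{Theorem 2.1} is stated under the hypothesis that \emph{every} $A_{e}$-submodule of $M_{g}$ is faithful, whereas here only $L$ itself is assumed faithful. A careful inspection of the proof of Theorem \ref{Theorem 2.1} shows that faithfulness is invoked solely to pass from equations of the form $rsL=0$ (for elements $r,s$ of the ambient ideals) to $rs=0$, an implication that requires only the faithfulness of the specific submodule $L$ under consideration. Hence the argument of Theorem \ref{Theorem 2.1} applies verbatim in our setting, and the proof goes through.
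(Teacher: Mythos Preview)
Your proof is correct and follows the same approach as the paper, which simply remarks that the result is proved similarly to Theorem \ref{Theorem 2.2 (1)}. Your additional observations---that $(K:_{A_{e}}L)=(K_{g}:_{A_{e}}L)$ so the two theorems are in fact equivalent, and that the invocation of Theorem \ref{Theorem 2.1} only requires faithfulness of the single submodule $L$---are accurate and more explicit than what the paper itself provides.
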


\begin{thm}\label{Theorem 2.2 (2)}
Let $M$ be a graded $A$-module and $K$ be a $g$-classical weakly prime $A$-submodule of $M$. If $Ann_{A_{e}}(M_{g})$ is a weakly prime ideal of $A_{e}$, then $(K_{g}:_{A_{e}}M_{g})$ is a weakly prime left ideal of $A_{e}$. 
\end{thm}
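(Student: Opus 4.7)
The plan is to adapt the four-case analysis from the proof of Theorem \ref{Theorem 2.1}, using the weakly prime assumption on $Ann_{A_{e}}(M_{g})$ as a substitute for the faithfulness hypothesis, in order to upgrade vanishing on $M_{g}$ to vanishing in $A_{e}$. First I would check that $(K_{g}:_{A_{e}}M_{g})$ is a left ideal of $A_{e}$ (routine from $A_{e}K_{g}\subseteq K_{g}$) and is proper because $K_{g}\neq M_{g}$ by the definition of a $g$-classical weakly prime submodule.

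Take ideals $I, J$ of $A_{e}$ with $0\neq IJ\subseteq (K_{g}:_{A_{e}}M_{g})$, so $IJM_{g}\subseteq K_{g}$. I would split into two cases. In the case $IJM_{g}=0$, one has $0\neq IJ\subseteq Ann_{A_{e}}(M_{g})$, and the weakly prime hypothesis on $Ann_{A_{e}}(M_{g})$ immediately yields $I\subseteq Ann_{A_{e}}(M_{g})\subseteq (K_{g}:_{A_{e}}M_{g})$ or $J\subseteq Ann_{A_{e}}(M_{g})\subseteq (K_{g}:_{A_{e}}M_{g})$.

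The substantive case is $0\neq IJM_{g}\subseteq K_{g}$. Assume for contradiction that $IM_{g}\nsubseteq K_{g}$ and $JM_{g}\nsubseteq K_{g}$, and fix witnesses $x_{0}\in I,\ y_{0}\in J$ with $x_{0}M_{g}\nsubseteq K_{g}$ and $y_{0}M_{g}\nsubseteq K_{g}$; in particular neither $x_{0}$ nor $y_{0}$ lies in $Ann_{A_{e}}(M_{g})$. From $x_{0}A_{e}y_{0}M_{g}\subseteq IJM_{g}\subseteq K$ and the $g$-classical weakly prime property of $K$ (applied with $L=M_{g}$), I would deduce $x_{0}A_{e}y_{0}M_{g}=0$, since otherwise one of $x_{0}M_{g}, y_{0}M_{g}$ would have to sit in $K\cap M_{g}=K_{g}$. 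Hence $x_{0}A_{e}y_{0}\subseteq Ann_{A_{e}}(M_{g})$, and the weakly prime hypothesis on $Ann_{A_{e}}(M_{g})$ then forces $x_{0}A_{e}y_{0}=0$.

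I would then replay the four-sub-case argument of Theorem \ref{Theorem 2.1}: for arbitrary $a\in I,\ b\in J$, partition according to whether $aM_{g}$ and $bM_{g}$ lie in $K_{g}$, and in each configuration perturb by $x_{0}$ and/or $y_{0}$ to reduce to the situation just handled, concluding $aA_{e}b\subseteq Ann_{A_{e}}(M_{g})$ and then $aA_{e}b=0$ in every case by the same weakly prime argument. Together these give $IA_{e}J=0$, hence $IJ=I\cdot 1\cdot J\subseteq IA_{e}J=0$, contradicting $IJM_{g}\neq 0$. The main obstacle is the careful bookkeeping across the four sub-cases and correctly invoking the weakly prime property of $Ann_{A_{e}}(M_{g})$ at each step to promote annihilation on $M_{g}$ to annihilation in $A_{e}$.
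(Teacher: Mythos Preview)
Your argument is correct and shares the paper's two-case split on whether $IJM_{g}=0$; the case $IJM_{g}=0$ is handled identically. In the case $IJM_{g}\neq 0$ the two approaches differ: the paper simply invokes Theorem~\ref{Theorem 2.1} with $L=M_{g}$ to obtain $IM_{g}\subseteq K$ or $JM_{g}\subseteq K$ and finishes in one line, whereas you rerun the four-sub-case elimination from the proof of Theorem~\ref{Theorem 2.1} by hand, inserting the weakly prime hypothesis on $Ann_{A_{e}}(M_{g})$ at each step to upgrade $aA_{e}bM_{g}=0$ to $aA_{e}b=0$. Your route has the merit of being self-contained and of not leaning on the faithfulness hypothesis that is baked into the statement of Theorem~\ref{Theorem 2.1}; on the other hand it does more than is necessary, since the four-sub-case analysis already yields $abM_{g}=0$ for all $a\in I$, $b\in J$ (hence $IJM_{g}=0$, a contradiction) without ever needing to promote vanishing on $M_{g}$ to vanishing in $A_{e}$ --- so the weakly prime assumption on $Ann_{A_{e}}(M_{g})$ is in fact only needed in the case $IJM_{g}=0$.
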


\begin{proof}
Since $K$ is a $g$-classical weakly prime $A$-submodule of $M$, $K_{g}\neq M_{g}$, and then $(K_{g}:_{A_{e}}M_{g})$ is a proper left ideal of $A_{e}$. Let $I, J$ be two ideals of $A_{e}$ such that $0\neq IJ\subseteq (K_{g}:_{A_{e}}M_{g})$. Then $IJM_{g}\subseteq K_{g}$. If $IJM_{g}\neq 0$, then $0\neq IJM_{g}\subseteq K_{g}\subseteq K$, and then by Theorem \ref{Theorem 2.1}, either $IM_{g}\subseteq K$ or $JM_{g}\subseteq K$. On the other hand, $IM_{g}\subseteq A_{e}M_{g}\subseteq M_{g}$, and similarly, $JM_{g}\subseteq M_{g}$. So, either $IM_{g}\subseteq K\bigcap M_{g}=K_{g}$ or $JM_{g}\subseteq K\bigcap M_{g}=K_{g}$, and hence either $I\subseteq (K_{g}:_{A_{e}}M_{g})$ or $J\subseteq (K_{g}:_{A_{e}}M_{g})$. If $IJM_{g}=0$, then $0\neq IJ\subseteq Ann_{A_{e}}(M_{g})$, and then either $I\subseteq Ann_{A_{e}}(M_{g})\subseteq (K_{g}:_{A_{e}}M_{g})$ or $J\subseteq Ann_{A_{e}}(M_{g})\subseteq (K_{g}:_{A_{e}}M_{g})$. Thus, $(K_{g}:_{A_{e}}M_{g})$ is a weakly prime left ideal of $A_{e}$. 
\end{proof}

Let $M, S$ be two $G$-graded $A$-modules. Then an $A$-homomorphism $f:M\rightarrow S$ is said to be a graded $A$-homomorphism if $f(M_{g})\subseteq S_{g}$ for all $g\in G$ \cite{Nastasescue}. 

\begin{thm}\label{Theorem 2.3}
Let $M, S$ be two $G$-graded $A$-modules and $f:M\rightarrow S$ be a graded $A$-homomorphism.
\begin{enumerate}
    \item If $f$ is injective and $K$ is a graded classical weakly prime $A$-submodule of $S$ with $f^{-1}(K)\neq M$, then $f^{-1}(K)$ is a graded classical weakly prime $A$-submodule of $M$.
    \item If $f$ is surjective and $K$ is a graded classical weakly prime $A$-submodule of $M$ with $Ker(f)\subseteq K$, then $f(K)$ is a graded classical weakly prime $A$-submodule of $S$. 
\end{enumerate}
\end{thm}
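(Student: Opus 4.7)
The plan is to treat both parts as routine pull-back/push-forward arguments applied to the witnessing data $(x, y, L)$ from the graded classical weakly prime condition. First I would record the structural preliminaries, all standard for a graded $A$-homomorphism: both $f^{-1}(K)$ and $f(K)$ are graded $A$-submodules (because homogeneous components of any preimage map to the corresponding homogeneous components of an element of $K$, and because $f(M_g) \subseteq S_g$), and if $f$ is surjective with $Ker(f) \subseteq K$ and $K \neq M$, then $f(K) \neq S$ (otherwise every $m \in M$ satisfies $f(m) = f(k)$ for some $k \in K$, forcing $m - k \in Ker(f) \subseteq K$, hence $M = K$).

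For part (1), given $x, y \in h(A)$ and a graded submodule $L$ of $M$ with $0 \neq xAyL \subseteq f^{-1}(K)$, the strategy is to push forward: $f(xAyL) = xAyf(L) \subseteq K$, and injectivity of $f$ transfers nonzeroness to $xAyf(L)$. Since $f(L)$ is a graded $A$-submodule of $S$, the hypothesis on $K$ yields $xf(L) \subseteq K$ or $yf(L) \subseteq K$, i.e., $f(xL) \subseteq K$ or $f(yL) \subseteq K$, which rewrites as $xL \subseteq f^{-1}(K)$ or $yL \subseteq f^{-1}(K)$.

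For part (2), given $x, y \in h(A)$ and a graded submodule $N$ of $S$ with $0 \neq xAyN \subseteq f(K)$, I would set $L = f^{-1}(N)$, which is a graded submodule of $M$ satisfying $f(L) = N$ by surjectivity, and verify that $0 \neq xAyL \subseteq K$. The inclusion uses $Ker(f) \subseteq K$ essentially: for $a \in A$ and $l \in L$, $f(xayl) = xayf(l) \in xAyN \subseteq f(K)$, so $xayl$ differs from some $k \in K$ by an element of $Ker(f) \subseteq K$, giving $xayl \in K$. Nonzeroness follows since $f(xAyL) = xAyN \neq 0$. Applying the hypothesis on $K$ then gives $xL \subseteq K$ or $yL \subseteq K$, and pushing forward produces $xN = f(xL) \subseteq f(K)$ or $yN = f(yL) \subseteq f(K)$.

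The only genuine obstacle is the containment $xAyL \subseteq K$ in part (2), which truly requires $Ker(f) \subseteq K$; without this hypothesis one would only obtain $xAyL \subseteq f^{-1}f(K)$, which may strictly contain $K$, and the argument would break. Every other verification—gradedness of image and preimage, properness of $f(K)$, and transfer of nonzeroness through $f$—is mechanical bookkeeping for graded homomorphisms.
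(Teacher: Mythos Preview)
Your proposal is correct and follows essentially the same push-forward/pull-back strategy as the paper: in (1) apply $f$ to the witnessing data and use injectivity for nonzeroness, and in (2) set $L=f^{-1}(N)$ and use $Ker(f)\subseteq K$ to deduce $xAyL\subseteq K$. If anything, you are slightly more careful than the paper, explicitly verifying $f(K)\neq S$ and spelling out how $Ker(f)\subseteq K$ converts $f(xAyL)\subseteq f(K)$ into $xAyL\subseteq K$, both of which the paper leaves implicit.
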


\begin{proof}
\begin{enumerate}
    \item By (\cite{Refai Dawwas}, Lemma 3.11 (1)), $f^{-1}(K)$ is a graded $A$-submodule of $M$. Let $x, y\in h(A)$ and $L$ be a graded $A$-submodule of $M$ such that $0\neq xAyL\subseteq f^{-1}(K)$. Then $f(L)$ is a graded $A$-submodule of $S$ by (\cite{Refai Dawwas}, Lemma 3.11 (2)) such that $0\neq xAyf(L)=f(xAyL)\subseteq K$, and then either $f(xL)=xf(L)\subseteq K$ or $f(yL)=yf(L)\subseteq K$, which implies that either $xL\subseteq f^{-1}(K)$ or $yL\subseteq f^{-1}(K)$. Hence, $f^{-1}(K)$ is a graded classical weakly prime $A$-submodule of $M$.
    \item By (\cite{Refai Dawwas}, Lemma 3.11 (2)), $f(K)$ is a graded $A$-submodule of $S$. Let $x, y\in h(A)$ and $L$ be a graded $A$-submodule of $S$ such that $0\neq xAyL\subseteq f(K)$. Then by (\cite{Refai Dawwas}, Lemma 3.11 (1)), $T=f^{-1}(L)$ is a graded $A$-submodule of $M$ such that $f(xAyT)=xAyf(T)=xAyL\subseteq f(K)$, and then $0\neq xAyT\subseteq K$ as $Ker(f)\subseteq K$. So, either $xT\subseteq K$ or $yT\subseteq K$, and then either $xL=xf(T)=f(xT)\subseteq f(K)$ or $yL=yf(T)=f(yT)\subseteq f(K)$. Hence, $f(K)$ is a graded classical weakly prime $A$-submodule of $S$. 
\end{enumerate}
\end{proof}

Let $M$ be a $G$-graded $A$-module and $T$ be a graded $A$-submodule of $M$. Then $M/T$ is a $G$-graded $A$-module by $(M/T)_{g}=(M_{g}+T)/T$ for all $g\in G$. By (\cite{Saber}, Lemma 3.2), if $K$ is an $A$-submodule of $M$ with $T\subseteq K$, then $K$ is a graded $A$-submodule of $M$ if and only if $K/T$ is a graded $A$-submodule of $M/T$. 

\begin{thm}\label{Corollary 2.4}
Let $M$ be a graded $A$-module and $T, K$ be a proper graded $A$-submodules of $M$ with $T\subsetneq K$. If $K$ is a graded classical weakly prime $A$-submodule of $M$, then $K/T$ is a graded classical weakly prime $A$-submodule of $M/T$. 
\end{thm}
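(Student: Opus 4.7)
The plan is to realize this as a direct instance of Theorem \ref{Theorem 2.3}(2) applied to the canonical projection. Let $\pi: M \to M/T$ be the natural surjection $m \mapsto m+T$. Because the grading on $M/T$ is defined by $(M/T)_g = (M_g+T)/T$, we have $\pi(M_g) \subseteq (M/T)_g$, so $\pi$ is a graded $A$-homomorphism in the sense of the definition recalled just before Theorem \ref{Theorem 2.3}. Also $\pi$ is surjective and $\ker(\pi) = T \subseteq K$ by hypothesis, while $K$ is graded classical weakly prime in $M$. Hence Theorem \ref{Theorem 2.3}(2) yields that $\pi(K) = K/T$ is a graded classical weakly prime $A$-submodule of $M/T$, which is exactly the claim. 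The strict inclusion $T \subsetneq K$ only plays a role in ensuring $K/T$ is nonzero (so the statement is non-vacuous); the properness $K/T \neq M/T$ follows from $K \neq M$, which is part of the hypothesis that $K$ is graded classical weakly prime.

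If one prefers a self-contained argument not invoking Theorem \ref{Theorem 2.3}(2), the steps are parallel. First, $K/T$ is a graded $A$-submodule of $M/T$ by the lemma of Saber cited before the theorem, and it is proper because $K \neq M$. Then take $x,y \in h(A)$ and a graded $A$-submodule $\overline{L}$ of $M/T$ with $0 \neq xAy\overline{L} \subseteq K/T$, and set $L = \pi^{-1}(\overline{L})$. Since $T \subseteq L$ and $L/T = \overline{L}$ is graded in $M/T$, Saber's lemma gives that $L$ is a graded $A$-submodule of $M$. The inclusion $xAy\overline{L} \subseteq K/T$ lifts to $xAyL \subseteq K$ because $T \subseteq K$, and the nonvanishing $xAy\overline{L} \neq 0$ forces some element of $xAyL$ to lie outside $T$, hence $xAyL \neq 0$ in $M$.

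At this point the graded classical weakly prime property of $K$ delivers $xL \subseteq K$ or $yL \subseteq K$. Projecting via $\pi$ gives $x\overline{L} \subseteq K/T$ or $y\overline{L} \subseteq K/T$, completing the verification.

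The only genuine check in either route is the transfer of the nonvanishing condition: one must rule out the degenerate possibility that $xAyL$ is entirely contained in $T$ but not in $0$, which could invalidate the application of the classical weakly prime property. I expect this to be the main (and only) conceptual obstacle, and it is resolved by observing that $xAy\overline{L} \neq 0$ in $M/T$ is by definition the statement that $xAyL \not\subseteq T$, which immediately implies $xAyL \neq 0$ since $0 \in T$. Everything else is bookkeeping about graded submodules and the compatibility of $\pi$ with the $A$-action.
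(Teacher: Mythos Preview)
Your proposal is correct. Your second, self-contained argument is essentially identical to the paper's proof: the paper writes an arbitrary graded submodule of $M/T$ as $L/T$, observes that $L$ is graded in $M$, lifts $(0+T)/T \neq xAy(L/T) \subseteq K/T$ to $0 \neq xAyL \subseteq K$, applies the hypothesis on $K$, and pushes back down.

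Your first approach, deducing the result directly from Theorem~\ref{Theorem 2.3}(2) applied to the canonical projection $\pi: M \to M/T$, is a genuinely shorter route that the paper does not take, even though the label \texttt{Corollary 2.4} hints that the authors may have had this in mind. What you gain is economy: no need to re-verify the lifting of the nonvanishing condition or the gradedness of the preimage, since those checks are already absorbed into the proof of Theorem~\ref{Theorem 2.3}(2). What the paper's direct argument buys is independence from that earlier result, so a reader can see the quotient case on its own terms. Both are sound; yours is the more efficient packaging.
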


\begin{proof}
Let $x, y\in h(A)$ and $L/T$ be a graded $A$-submodule of $M/T$ such that $(0+T)/T\neq (xAyL+T)/T=xAy((L+T)/T)\subseteq K/T$. Then $L$ is a graded $A$-submodule of $M$ such that $0\neq xAyL\subseteq K$, and then either $xL\subseteq K$ or $yL\subseteq K$, which implies that either $x((L+T)/T)=(xL+T)/T\subseteq (K+T)/T\subseteq K/T$ or $y((L+T)/T)=(yL+T)/T\subseteq (K+T)/T\subseteq K/T$. Hence, $K/T$ is a graded classical weakly prime $A$-submodule of $M/T$.  
\end{proof}

\begin{thm}\label{Theorem 2.4}
Let $M$ be a graded $A$-module and $T, K$ be a proper graded $A$-submodules of $M$ with $T\subsetneq K$. If $T$ is a graded classical weakly prime $A$-submodule of $M$ and $K/T$ is a graded classical weakly prime $A$-submodule of $M/T$, then $K$ is a graded classical weakly prime $A$-submodule of $M$. 
\end{thm}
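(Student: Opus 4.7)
The plan is to verify the defining property of a graded classical weakly prime submodule for $K$ by taking arbitrary $x, y \in h(A)$ and a graded $A$-submodule $L$ of $M$ with $0 \neq xAyL \subseteq K$, and then splitting into two cases according to whether $xAyL$ is contained in $T$ or not.

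In the first case, suppose $xAyL \subseteq T$. Since we already have $xAyL \neq 0$, this gives $0 \neq xAyL \subseteq T$. Because $T$ itself is graded classical weakly prime in $M$, we conclude directly that $xL \subseteq T$ or $yL \subseteq T$, and both of these imply $xL \subseteq K$ or $yL \subseteq K$ using $T \subseteq K$.

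In the second case, suppose $xAyL \not\subseteq T$. Here I would pass to the quotient $M/T$. Since $L$ is a graded $A$-submodule of $M$, the set $(L+T)/T$ is a graded $A$-submodule of $M/T$, and one computes $xAy((L+T)/T) = (xAyL + T)/T$ (using $xAyT \subseteq T$). The hypothesis $xAyL \not\subseteq T$ is exactly the statement that $(xAyL+T)/T \neq (0+T)/T$, and the containment $xAyL \subseteq K$ together with $T \subseteq K$ gives $(xAyL+T)/T \subseteq K/T$. Applying the assumption that $K/T$ is graded classical weakly prime in $M/T$ yields either $x((L+T)/T) \subseteq K/T$ or $y((L+T)/T) \subseteq K/T$, which translates back to $xL + T \subseteq K$ or $yL + T \subseteq K$, and since $T \subseteq K$, to $xL \subseteq K$ or $yL \subseteq K$, as required.

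There is no real obstacle here; the argument is a standard correspondence-type case split. The only point requiring minor care is justifying the identity $xAy((L+T)/T) = (xAyL+T)/T$ and checking that $(L+T)/T$ is indeed a graded $A$-submodule of $M/T$, which follows from the graded-submodule structure on quotients recalled just before Theorem~\ref{Corollary 2.4}.
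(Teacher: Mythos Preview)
Your proof is correct and follows essentially the same approach as the paper's own proof: assume $0\neq xAyL\subseteq K$, split into the cases $xAyL\subseteq T$ and $xAyL\nsubseteq T$, and use the hypothesis on $T$ in the first case and pass to the quotient $M/T$ in the second. In fact you are slightly more careful than the paper in explicitly carrying the condition $xAyL\neq 0$ into the first case before invoking the graded classical weakly prime property of $T$.
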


\begin{proof}
Let $x, y\in h(A)$ and $L$ be a graded $A$-submodule of $M$ such that $xAyL\subseteq K$. If $xAyL\subseteq T$, then either $xL\subseteq T\subsetneq K$ or $yL\subseteq T\subsetneq K$ as required. Suppose that $xAyL\nsubseteq T$. Then $(0+T)/T\neq xAy((L+T)/T)=(xAyL+T)/T\subseteq (K+T)/T\subseteq K/T$, and then either $(xL+T)/T=x((L+T)/T)\subseteq K/T$ or $(yL+T)/T=y((L+T)/T)\subseteq K/T$, which implies that either $xL\subseteq K$ or $yL\subseteq K$. Hence, $K$ is a graded classical weakly prime $A$-submodule of $M$.
\end{proof}

Graded weakly $2$-absorbing ideals over non-commutative graded rings have been introduced and examined in \cite{Alshehry Habeb}. A proper graded ideal $P$ of $A$ is said to be a graded weakly $2$-absorbing ideal of $A$ if whenever $x, y, z\in h(A)$ such that $0\neq xAyAz\subseteq P$, then $xy\in P$ or $yz\in P$ or $xz\in P$. In this article, we introduce and investigate the concept of graded weakly $2$-absorbing submodules over non-commutative graded rings as follows:

\begin{defn} Let $M$ be a graded $A$-module and $K$ be a proper graded $A$-submodule of $M$. Then 
\begin{enumerate}
    \item $K$ is said to be a graded weakly $2$-absorbing submodule of $M$ if whenever $x, y\in h(A)$ and $L$ is a graded $A$-submodule of $M$ such that $0\neq xAyL\subseteq K$, then $xL\subseteq K$ or $yL\subseteq K$ or $xAy\subseteq (K:_{A}M)$.
    \item $K$ is said to be a graded completely weakly $2$-absorbing submodule of $M$ if whenever $x, y\in h(A)$ and $z\in h(M)$ such that $0\neq xyz\in K$, then $xz\in K$ or $yz\in K$ or $xy\in (K:_{A}M)$.
\end{enumerate}
\end{defn}

Clearly, every graded classical weakly prime submodule is graded weakly $2$-absorbing, and every graded completely classical weakly prime submodule is graded completely weakly $2$-absorbing. Also, if $A$ is a commutative graded ring with unity, then the concepts of graded weakly $2$-absorbing submodules and graded completely weakly $2$-absorbing submodules coincide. Indeed, this will not be the case for non-commutative graded rings; as in Example \ref{1}, we have $K$ is a graded weakly $2$-absorbing $A$-submodule of $M$ since it is a graded prime, but $K$ is not a graded completely weakly $2$-absorbing $A$-submodule of $M$ since $0\neq xyz\in K$, $xz\notin K$, $yz\notin K$ and $xy\nsubseteq (K:_{A}M)$.

\begin{prop}\label{Proposition 2.1}
Let $M$ be a graded $A$-module and $K$ be a proper graded $A$-submodule of $M$. If $K$ is a graded weakly $2$-absorbing $A$-submodule of $M$ and $(K:_{A}M)$ is a graded weakly prime ideal of $A$, then $K$ is a graded classical weakly prime $A$-submodule of $M$.
\end{prop}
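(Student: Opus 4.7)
The plan is to chase definitions: assume the hypothesis of graded classical weakly primeness fails at one stage, use the weakly $2$-absorbing property to split into three cases, and handle the extra case using that $(K:_{A}M)$ is graded weakly prime. Concretely, I would start by fixing homogeneous elements $x, y \in h(A)$ and a graded $A$-submodule $L$ of $M$ with $0 \neq xAyL \subseteq K$, and apply the graded weakly $2$-absorbing hypothesis on $K$ to produce the trichotomy: either $xL \subseteq K$, or $yL \subseteq K$, or $xAy \subseteq (K:_{A}M)$.

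The first two possibilities are precisely the desired conclusion, so the only real work is the third. Here I would observe that $xAyL \neq 0$ forces $xAy \neq 0$ (if $xAy$ were the zero set, then $xAyL$ would also be zero), so we have $0 \neq xAy \subseteq (K:_{A}M)$. Now I would invoke the graded weakly prime hypothesis on $(K:_{A}M)$, using the homogeneous-element reformulation recalled in the introduction (namely Proposition 2.3 of \cite{Alshehry Dawwas}): for $x, y \in h(A)$, the containment $0 \neq xAy \subseteq (K:_{A}M)$ implies $x \in (K:_{A}M)$ or $y \in (K:_{A}M)$.

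From $x \in (K:_{A}M)$ we get $xM \subseteq K$, hence $xL \subseteq xM \subseteq K$; symmetrically $y \in (K:_{A}M)$ yields $yL \subseteq K$. Either way the graded classical weakly prime property of $K$ is verified, completing the argument.

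The proof is essentially mechanical once one has the right characterization in hand; the only mildly delicate point is remembering that graded weak primeness of the two-sided ideal $(K:_{A}M)$ can be tested on products of the form $xAy$ with $x, y$ homogeneous (rather than only on graded ideals $I, J$), and that the nonvanishing condition transfers correctly from $xAyL$ to $xAy$. No technical obstacle arises beyond this bookkeeping.
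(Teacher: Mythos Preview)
Your proof is correct and follows essentially the same approach as the paper: apply the weakly $2$-absorbing trichotomy, dispose of the first two cases immediately, and in the third case use $xAyL\neq 0\Rightarrow xAy\neq 0$ together with the homogeneous-element characterization of graded weakly prime ideals to force $x\in(K:_{A}M)$ or $y\in(K:_{A}M)$, whence $xL\subseteq K$ or $yL\subseteq K$. The paper's proof is virtually identical in structure and detail.
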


\begin{proof}
Let $x, y\in h(A)$ and $L$ be a graded $A$-submodule of $M$ such that $0\neq xAyL\subseteq K$. Then $xL\subseteq K$ or $yL\subseteq K$ or $xAy\subseteq (K:_{A}M)$ since $K$ is a graded weakly $2$-absorbing $A$-submodule of $M$. If $xL\subseteq K$ or $yL\subseteq K$, then it is done. Suppose that $xAy\subseteq (K:_{A}M)$. Then as $0\neq xAy$, either $x\in (K:_{A}M)$ or $y\in (K:_{A}M)$ since $(K:_{A}M)$ is a graded weakly prime ideal of $A$, and then either $xL\subseteq xM\subseteq K$ or $yL\subseteq yM\subseteq K$. Hence, $K$ is a graded classical weakly prime $A$-submodule of $M$. 
\end{proof}

\begin{defn}
Let $M$ be a graded $A$-module and $K$ be a proper graded $A$-submodule of $M$. Then $K$ is said to be a graded classical prime $A$-submodule of $M$ if whenever $x, y\in h(A)$ and $L$ is a graded $A$-submodule of $M$ such that $xAyL\subseteq K$, then either $xL\subseteq K$ or $yL\subseteq K$.
\end{defn}

Clearly, every graded classical prime submodule is graded classical weakly prime. However, the next example shows that a graded classical weakly prime submodule is not necessarily graded classical prime:

\begin{exa}\label{11}
Consider $A=M=M_{2}(\mathbb{Z}_{8})$ and $G=\mathbb{Z}_{4}$. $M$ also is a $G$-graded left $A$-module by the same graduation of $A$. Then $A$ is $G$-graded by $A_{0}=\left(\begin{array}{cc}
        \mathbb{Z}_{8} & 0 \\
        0 & \mathbb{Z}_{8} 
      \end{array}\right)$, $A_{2}=\left(\begin{array}{cc}
        0 & \mathbb{Z}_{8} \\
        \mathbb{Z}_{8} & 0 
      \end{array}\right)$ and $A_{1}=A_{3}=\left(\begin{array}{cc}
        0 & 0 \\
        0 & 0 
      \end{array}\right)$. Now, $K=\left(\begin{array}{cc}
        0 & 0 \\
        0 & 0 
      \end{array}\right)$ is a graded classical weakly prime $A$-submodule of $M$, but $K$ is not a graded classical prime $A$-submodule of $M$ since $x=\left(\begin{array}{cc}
        2 & 0 \\
        0 & 2 
      \end{array}\right)\in h(A)$ and $L=Ax$ is a graded $A$-submodule of $M$ with $xAxL\subseteq K$ and $xL\nsubseteq K$.
\end{exa}

\begin{prop}\label{10}
Let $M$ be a graded $A$-module. If $K$ is a graded classical weakly prime $A$-submodule of $M$ which is not graded classical prime, then there exist $x, y\in h(A)$ and a graded $A$-submodule $L$ of $M$ such that $xAyL=0$, $xL\nsubseteq K$ and $yL\nsubseteq K$.
\end{prop}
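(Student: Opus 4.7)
The plan is to run a short contrapositive-style argument that directly compares the two definitions. Since $K$ is \emph{not} a graded classical prime $A$-submodule of $M$, the negation of that definition gives me homogeneous elements $x,y \in h(A)$ and a graded $A$-submodule $L$ of $M$ such that $xAyL \subseteq K$ while neither $xL \subseteq K$ nor $yL \subseteq K$. This is the only source of the witnesses $x$, $y$, $L$ that will appear in the conclusion, so I would start by unpacking the failure of the classical prime property to extract them.

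Next I would feed these same witnesses into the graded classical weakly prime hypothesis on $K$. The weakly prime definition is a strict strengthening of the classical prime condition in the sense that it assumes $0 \neq xAyL \subseteq K$ before concluding $xL \subseteq K$ or $yL \subseteq K$. Because I already know from the previous step that neither $xL \subseteq K$ nor $yL \subseteq K$ holds, the only way to avoid a contradiction with the graded classical weakly prime property is to deny its hypothesis, i.e.\ to conclude $xAyL = 0$. Combined with $xL \nsubseteq K$ and $yL \nsubseteq K$, this gives exactly the three conclusions demanded by the proposition.

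There is essentially no obstacle here: no case analysis, no change of grading, no appeal to the earlier structural theorems (Theorem \ref{Theorem 2.1} or its corollaries) is required, since the argument is purely a syntactic comparison between the definitions of graded classical prime and graded classical weakly prime. The only thing I would be careful about is making sure the witness $L$ produced from the failure of classical primeness is genuinely a graded $A$-submodule (it is, by the very form of the definition of graded classical prime), so that it is admissible as input to the graded classical weakly prime condition. With that observation, the proof reduces to two sentences in the write-up.
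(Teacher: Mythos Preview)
Your proposal is correct and follows exactly the same line as the paper's proof: extract witnesses $x,y,L$ from the failure of the graded classical prime condition, then observe that if $xAyL\neq 0$ the graded classical weakly prime hypothesis would force $xL\subseteq K$ or $yL\subseteq K$, a contradiction, so $xAyL=0$. There is no meaningful difference in method or detail.
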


\begin{proof}
Since $K$ is not graded classical prime $A$-submodule of $M$, there exist $x, y\in h(A)$ and a graded $A$-submodule $L$ of $M$ such that $xAyL\subseteq K$, $xL\nsubseteq K$ and $yL\nsubseteq K$. If $xAyL\neq 0$, then since $K$ is a graded classical weakly prime $A$-submodule of $M$, either $xL\nsubseteq K$ or $yL\nsubseteq K$, which is a contradiction. So, $xAyL=0$.
\end{proof}

\begin{defn}
Let $M$ be a graded $A$-module, $K$ be a proper graded $A$-submodule of $M$, $L$ be a graded $A$-submodule of $M$ and $x, y\in h(A)$. Then $(x, y, L)$ is said to be a graded classical triple zero of $K$ if $xAyL=0$, $xL\nsubseteq K$ and $yL\nsubseteq K$.
\end{defn}

\begin{rem}
If $K$ is a graded classical weakly prime $A$-submodule of $M$ which is not graded classical prime, then by Proposition \ref{10}, there exists a graded classical triple zero of $K$. Note that, in Example \ref{11}, $(x, x, L)$ is a graded classical triple zero of $K$.
\end{rem}

\begin{prop}\label{Proposition 2.2}
Let $M$ be a graded $A$-module, $K$ be a graded classical weakly prime $A$-submodule of $M$ and $xAyL\subseteq
 K$, for some $x, y\in h(A)$ and some graded $A$-submodule $L$ of $M$. If $(x, y, L)$ is not graded classical triple zero of $K$, then either $xL\subseteq K$ or $yL\subseteq K$.
\end{prop}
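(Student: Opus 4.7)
The plan is to argue directly from the definitions by case analysis on whether $xAyL = 0$ or $xAyL \neq 0$. Unwinding the definition of a graded classical triple zero, the hypothesis that $(x,y,L)$ is \emph{not} a graded classical triple zero of $K$ means that at least one of the following fails: $xAyL = 0$, $xL \nsubseteq K$, $yL \nsubseteq K$. Equivalently, $xAyL \neq 0$, or $xL \subseteq K$, or $yL \subseteq K$.

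First I would dispose of the easy case: if $xL \subseteq K$ or $yL \subseteq K$ holds, then the conclusion is immediate. So I may assume $xL \nsubseteq K$ and $yL \nsubseteq K$. Since $(x,y,L)$ is not a graded classical triple zero, the only remaining possibility from the disjunction above is that $xAyL \neq 0$.

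Now I would combine $xAyL \neq 0$ with the standing assumption $xAyL \subseteq K$ to get $0 \neq xAyL \subseteq K$. Because $x, y \in h(A)$ and $L$ is a graded $A$-submodule of $M$, the definition of a graded classical weakly prime $A$-submodule applies and yields $xL \subseteq K$ or $yL \subseteq K$, contradicting the assumption we made for this case. Hence the assumption $xL \nsubseteq K$ and $yL \nsubseteq K$ cannot hold, which gives the desired conclusion.

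There is essentially no obstacle here: the statement is almost a tautological reformulation of the definitions, and the only substantive ingredient is applying the graded classical weakly prime property to the nonzero inclusion $0 \neq xAyL \subseteq K$. The only care needed is to cleanly translate the negation of "graded classical triple zero" into its three constituent alternatives and to keep the case split transparent.
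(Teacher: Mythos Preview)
Your proof is correct and follows essentially the same approach as the paper: a case split on whether $xAyL=0$ or $xAyL\neq 0$, using the negation of the triple-zero condition in the first case and the graded classical weakly prime property in the second. The only cosmetic difference is that the paper splits directly on $xAyL$, whereas you first reduce to the situation $xL\nsubseteq K$ and $yL\nsubseteq K$ before invoking $xAyL\neq 0$; the logical content is identical.
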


\begin{proof}
If $xAyL=0$, then since $(x, y, L)$ is not graded classical triple zero of $K$, either $xL\subseteq K$ or $yL\subseteq K$. If $xAyL\neq 0$, then since $K$ is a graded classical weakly prime $A$-submodule of $M$, either $xL\subseteq K$ or $yL\subseteq K$.
\end{proof}

\begin{cor}\label{Corollary 2.5}
Let $M$ be a graded $A$-module and $K$ be a graded classical weakly prime $A$-submodule of $M$. If $(x, y, L)$ is not graded classical triple zero of $K$, for all $x, y\in h(A)$ and all graded $A$-submodule $L$ of $M$, then $K$ is a graded classical prime $A$-submodule of $M$.
\end{cor}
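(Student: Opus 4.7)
The plan is to reduce the statement to a direct application of Proposition \ref{Proposition 2.2}. To verify that $K$ is graded classical prime, I need to check two things: that $K \neq M$, and that whenever $x, y \in h(A)$ and $L$ is a graded $A$-submodule of $M$ with $xAyL \subseteq K$, one has either $xL \subseteq K$ or $yL \subseteq K$. The condition $K \neq M$ is immediate from the hypothesis that $K$ is graded classical weakly prime (properness is built into the definition of graded classical weakly prime submodule).

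For the main implication, I would fix arbitrary $x, y \in h(A)$ and an arbitrary graded $A$-submodule $L$ of $M$ with $xAyL \subseteq K$. By hypothesis, $(x,y,L)$ is not a graded classical triple zero of $K$. Since $K$ is a graded classical weakly prime $A$-submodule of $M$ and $xAyL \subseteq K$, Proposition \ref{Proposition 2.2} applies directly and yields $xL \subseteq K$ or $yL \subseteq K$. This is precisely the defining condition of a graded classical prime $A$-submodule, so $K$ is graded classical prime.

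There is no real obstacle here; the statement is a packaging of Proposition \ref{Proposition 2.2} applied uniformly over all triples $(x,y,L)$. The only subtlety worth noting explicitly in the write-up is that one does not need to separate the cases $xAyL = 0$ and $xAyL \neq 0$ by hand, because Proposition \ref{Proposition 2.2} already covers both cases: when $xAyL \neq 0$ the conclusion follows from the graded classical weakly prime property, and when $xAyL = 0$ it follows from the assumption that $(x,y,L)$ is not a graded classical triple zero.
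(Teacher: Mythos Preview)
Your proposal is correct and matches the paper's intended approach: the paper states this result as an immediate corollary of Proposition~\ref{Proposition 2.2} without giving a separate proof. Your explicit observation that $K\neq M$ is inherited from the graded classical weakly prime hypothesis is the only detail one might add, and you have handled it.
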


\begin{prop}\label{Remark 2.1}
Let $M$ be a graded $A$-module, $K$ be a graded classical weakly prime $A$-submodule of $M$ and $IJL\subseteq K$, for some graded ideals $I, J$ of $A$ and some graded $A$-submodule $L$ of $M$. If $(x, y, L)$ is not graded classical triple zero of $K$, for all $x\in I\bigcap h(A)$ and $y\in J\bigcap h(A)$, then for all $x\in I$, $y\in J$ and $g\in G$, we have either $x_{g}L\subseteq K$ or $y_{g}L\subseteq K$. 
\end{prop}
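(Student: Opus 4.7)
The plan is to reduce the statement to Proposition \ref{Proposition 2.2} applied component-wise. Fix arbitrary $x\in I$, $y\in J$, and $g\in G$, and consider the homogeneous components $x_{g}\in A_{g}$ and $y_{g}\in A_{g}$. Since $I$ and $J$ are \emph{graded} ideals, the decomposition of $x$ into homogeneous pieces has all pieces inside $I$, so $x_{g}\in I\cap A_{g}\subseteq I\cap h(A)$; similarly $y_{g}\in J\cap h(A)$. By the hypothesis of the proposition, the triple $(x_{g},y_{g},L)$ is therefore not a graded classical triple zero of $K$.

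Next I would verify that $x_{g}Ay_{g}L\subseteq K$. This uses that $I$ and $J$ are two-sided graded ideals: for any $a\in A$ we have $ay_{g}\in J$ (since $y_{g}\in J$ and $J$ is a two-sided ideal), and $x_{g}\in I$, so the product $x_{g}(ay_{g})$ is a typical generator of $IJ$. Hence
\[
x_{g}Ay_{g}L\;\subseteq\;IJL\;\subseteq\;K.
\]

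With this inclusion in place, $x_{g},y_{g}\in h(A)$, $L$ a graded $A$-submodule of $M$, and $(x_{g},y_{g},L)$ not a graded classical triple zero of $K$, Proposition \ref{Proposition 2.2} directly yields $x_{g}L\subseteq K$ or $y_{g}L\subseteq K$, which is the desired conclusion.

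There is really no serious obstacle here; the only subtlety is to remember to pass to the homogeneous components $x_{g},y_{g}$ (so that Proposition \ref{Proposition 2.2}, which requires homogeneous inputs, becomes applicable) and to note that the assumption is stated precisely for pairs drawn from $I\cap h(A)$ and $J\cap h(A)$, which is exactly the set to which $(x_{g},y_{g})$ belongs thanks to the gradedness of $I$ and $J$.
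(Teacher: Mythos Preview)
Your proof is correct and follows the same approach as the paper. The only cosmetic difference is that the paper, instead of invoking Proposition \ref{Proposition 2.2}, inlines its content by splitting into the cases $x_{g}Ay_{g}L\neq 0$ (use the graded classical weakly prime property) and $x_{g}Ay_{g}L=0$ (use that $(x_{g},y_{g},L)$ is not a graded classical triple zero); your citation of Proposition \ref{Proposition 2.2} packages these two cases into one step.
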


\begin{proof}
Let $x\in I$, $y\in J$ and $g\in G$. Then $x_{g}\in I$ and $y_{g}\in J$ since $I$ and $J$ are graded ideals, and then $x_{g}Ay_{g}L\subseteq IJL\subseteq K$. If $x_{g}Ay_{g}L\neq 0$, then since $K$ is a graded classical weakly prime $A$-submodule of $M$, either $x_{g}L\subseteq K$ or $y_{g}L\subseteq K$. If $x_{g}Ay_{g}L=0$, then since $(x_{g}, y_{g}, L)$ is not graded classical triple zero of $K$, either $x_{g}L\subseteq K$ or $y_{g}L\subseteq K$.
\end{proof}

\begin{cor}\label{Proposition 2.3}
Let $M$ be a graded $A$-module, $K$ be a graded classical weakly prime $A$-submodule of $M$ and $IJL\subseteq K$, for some graded ideals $I, J$ of $A$ and some graded $A$-submodule $L$ of $M$. If $(x, y, L)$ is not graded classical triple zero of $K$, for all $x\in I\bigcap h(A)$ and $y\in J\bigcap h(A)$, then for all $g\in G$, we have either $I_{g}L\subseteq K$ or $J_{g}L\subseteq K$.
\end{cor}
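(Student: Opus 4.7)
The plan is to derive this directly from Proposition \ref{Remark 2.1}, which already delivers a pointwise-in-$(x,y)$ version of the conclusion. The driving observation is that the corollary concerns only the homogeneous components $I_g, J_g$, whose elements are themselves homogeneous of degree $g$; for such an element $a$, the component $a_g$ that appears in the statement of Proposition \ref{Remark 2.1} is simply $a$ itself. This is what will allow us to upgrade the elementwise statement to the claimed submodule-level inclusion.

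Concretely, I would fix $g \in G$ and argue contrapositively: assume $I_g L \not\subseteq K$ and try to show $J_g L \subseteq K$. By the failed inclusion there exists some $a \in I_g$ with $aL \not\subseteq K$. Let $b \in J_g$ be arbitrary. Since $a \in A_g \cap I$ and $b \in A_g \cap J$, one has $a_g = a$ and $b_g = b$ in their homogeneous decompositions. Apply Proposition \ref{Remark 2.1} to the elements $a \in I$, $b \in J$ with the fixed $g$; note that the hypothesis on the absence of graded classical triple zeros over $I \cap h(A)$ and $J \cap h(A)$ is exactly the hypothesis carried over into the present corollary, so the proposition applies. Its conclusion forces either $aL = a_g L \subseteq K$ or $bL = b_g L \subseteq K$, and the former is excluded by the choice of $a$. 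Hence $bL \subseteq K$.

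Since $b \in J_g$ was arbitrary and $K$ is additively closed, this yields $J_g L \subseteq K$, completing the contrapositive and therefore the corollary. I do not anticipate any genuine obstacle: all the substantive work has been placed in Proposition \ref{Remark 2.1}, and the only thing one must be careful about is the (trivial but essential) identification $a_g = a$ for $a \in A_g$, which is precisely what lets us pass from the $x_g L$, $y_g L$ language of the previous proposition to the $I_g L$, $J_g L$ language of the corollary.
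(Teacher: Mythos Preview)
Your argument is correct and is precisely the intended derivation: the paper states this result as a corollary of Proposition \ref{Remark 2.1} without supplying a proof, and what you have written is exactly the standard witness argument one uses to pass from the elementwise alternative $x_gL\subseteq K$ or $y_gL\subseteq K$ to the containment at the level of $I_g$ and $J_g$. The only remark is that the clause ``$K$ is additively closed'' is already implicit in $bL\subseteq K$ for all $b\in J_g$, since $J_gL$ is by definition the additive subgroup generated by such products.
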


\begin{thm}\label{Theorem 2.5}
Let $M$ be a graded $A$-module and $K$ be a graded classical weakly prime $A$-submodule of $M$. Let $L$ be a graded $A$-submodule of $M$ and $x, y\in h(A)$. If $(x, y, L)$ is a graded classical triple zero of $K$, then the following statements hold:
\begin{enumerate}
    \item $xAyK=0$.
    \item If $y\in A_{g}$, for some $g\in G$, then $x(K:_{A_{g}}M)L=0$.
    \item If $x\in A_{g}$, for some $g\in G$, then $(K:_{A_{g}}M)yL=0$.
    \item If $x, y\in A_{g}$, for some $g\in G$, then $(K:_{A_{g}}M)^{2}L=0$.
    \item If $y\in A_{g}$, for some $g\in G$, then $x(K:_{A_{g}}M)K=0$.
    \item If $x\in A_{g}$, for some $g\in G$, then $(K:_{A_{g}}M)yK=0$.
    \item If $x, y\in A_{g}$, for some $g\in G$, then $(K:_{A_{g}}M)^{2}K=0$.
\end{enumerate}
\end{thm}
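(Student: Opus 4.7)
My plan is to prove the seven claims in sequence, with each building on the previous ones.

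For (1), I would consider the graded $A$-submodule $L+K$ and, using distributivity together with the hypothesis $xAyL=0$, write $xAy(L+K)=xAyK$. Since $yK\subseteq K$ and $xK\subseteq K$ (as $K$ is an $A$-submodule), we have $xAyK\subseteq K$. If $xAyK\neq 0$, then $0\neq xAy(L+K)\subseteq K$, so the classical weakly prime property applied to $L+K$ forces $x(L+K)\subseteq K$ or $y(L+K)\subseteq K$; since $xK,yK\subseteq K$ automatically, these reduce respectively to $xL\subseteq K$ or $yL\subseteq K$, contradicting the triple-zero hypothesis.

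For (2), with $y\in A_{g}$, I would argue by perturbation: for any $z\in (K:_{A_{g}}M)\subseteq A_{g}$, the element $y+z$ is homogeneous of degree $g$, and $xA(y+z)L=xAzL$ follows from $xAyL=0$. Since $zL\subseteq zM\subseteq K$, we get $xAzL\subseteq K$. If $xzL\neq 0$ for some $z$, then $0\neq xzL\subseteq xA(y+z)L\subseteq K$, so weakly prime forces $xL\subseteq K$ (a contradiction) or $(y+z)L\subseteq K$, and the latter combined with $zL\subseteq K$ gives $yL\subseteq K$ (also a contradiction). Hence $xzL=0$ for all such $z$, proving (2). Part (3) follows by the symmetric argument perturbing $x$. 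For (4), with $x,y\in A_{g}$, I would take $z_{1},z_{2}\in (K:_{A_{g}}M)$ and set $u=x+z_{1}$, $v=y+z_{2}\in A_{g}$. Expanding $uAvL$ into four terms shows each lies in $K$ (using $z_{i}M\subseteq K$ and the $A$-submodule property), so $uAvL\subseteq K$. Using $xyL=0$, $xz_{2}L=0$ (by (2)), and $z_{1}yL=0$ (by (3)), the corresponding expansion of $uvL$ collapses to $z_{1}z_{2}L$. If $(K:_{A_{g}}M)^{2}L\neq 0$, choosing $z_{1},z_{2}$ with $z_{1}z_{2}L\neq 0$ yields $0\neq uvL\subseteq uAvL\subseteq K$, so weakly prime forces $uL\subseteq K$ or $vL\subseteq K$, each reducing as before to $xL\subseteq K$ or $yL\subseteq K$, a contradiction.

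For (5)--(7), I would exploit the key observation that once (1) is in hand, the triple $(x,y,L+K)$ is itself a graded classical triple zero of $K$: indeed $xAy(L+K)=xAyL+xAyK=0$ by the hypothesis and (1), while $x(L+K)\nsubseteq K$ and $y(L+K)\nsubseteq K$ because $xL,yL\nsubseteq K$ and $xK,yK\subseteq K$. Applying (2), (3), (4) respectively to this new triple gives $x(K:_{A_{g}}M)(L+K)=0$, $(K:_{A_{g}}M)y(L+K)=0$, and $(K:_{A_{g}}M)^{2}(L+K)=0$, whose restrictions to the $K$-summand are precisely (5), (6), (7). The main obstacle I anticipate is the bookkeeping in part (4), specifically verifying simultaneously that $uvL=z_{1}z_{2}L$ and $uAvL\subseteq K$; once these two identities are secured, the contradiction via weakly prime is immediate.
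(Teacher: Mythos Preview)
Your proof is correct. Parts (2)--(4) follow the same perturbation strategy as the paper (replacing $y$ by $y+s$ and $x$ by $x+r$ with $r,s\in(K:_{A_{g}}M)$), and your handling of (4) is in fact a bit more careful than the paper's in isolating the nonzero witness via $uvL=z_{1}z_{2}L\subseteq uAvL$. The real difference is in (1) and (5)--(7). For (1) the paper selects a single homogeneous $z_{g}\in K$ with $xAyz_{g}\neq0$ and works with $T=L+Az_{g}$, whereas you use the whole submodule $L+K$ at once; both work, and yours is shorter. For (5)--(7) the paper repeats the perturbation-plus-enlargement argument from scratch, each time passing to $T=L+Az_{h}$ for a suitable homogeneous $z_{h}\in K$. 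Your observation that, once (1) is established, $(x,y,L+K)$ is itself a graded classical triple zero of $K$ allows you simply to invoke (2), (3), (4) for this new triple and read off (5), (6), (7) by restricting to the $K$-summand (taking $l=0$ in $L+K$). This is a cleaner and more economical route: it turns (5)--(7) into immediate corollaries of (1)--(4), whereas the paper's direct arguments yield no additional information.
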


\begin{proof}
\begin{enumerate}
    \item Suppose that $xAyK\neq 0$. Then there exists $z\in K$ such that $xAyz\neq 0$, and then there exists $g\in G$ such that $xAyz_{g}\neq 0$, and hence $xAyAz_{g}\neq 0$. Note that $z_{g}\in K$ as $K$ is a graded $A$-submodule. Let $T=L+Az_{g}$. Then $T$ is a graded $A$-submodule of $M$ such that $0\neq xAyT\subseteq K$, and then either $xT\subseteq K$ or $yT\subseteq K$, which implies that either $xL\subseteq xT\subseteq K$ or $yL\subseteq yT\subseteq K$, which is a contradiction. Thus $xAyK=0$. 
    \item Suppose that $x(K:_{A_{g}}M)L\neq 0$. Then there exists $s\in (K:_{A_{g}}M)$ such that $xsL\neq 0$, and then $xAsL\neq 0$, and hence $0\neq xA(y+s)L\subseteq K$. So, either $xL\subseteq K$ or $(y+s)L\subseteq K$, and then either $xL\subseteq K$ or $yL\subseteq K$, which is a contradiction. Hence, $x(K:_{A_{g}}M)L=0$.
    \item Similar to (2).
    \item Suppose that $(K:_{A_{g}}M)^{2}L\neq 0$. Then there exist $r, s\in (K:_{A_{g}}M)$ such that $rsL\neq 0$, and then $rAsL\neq 0$, and hence by (2) and (3), $0\neq (x+r)A(y+s)L\subseteq K$. So, either $(x+r)L\subseteq K$ or $(y+s)L\subseteq K$, and then either $xL\subseteq K$ or $yL\subseteq K$, which is a contradiction. Hence, $(K:_{A_{g}}M)^{2}L=0$.
    \item Suppose that $x(K:_{A_{g}}M)K\neq 0$. Then there exists $s\in (K:_{A_{g}}M)$ such that $xsK\neq 0$, and then $xAsK\neq 0$, and hence by (1), $xA(y+s)K\neq 0$, which implies that $xA(y+s)z\neq 0$, for some $z\in K$, and so there exists $h\in G$ such that $xA(y+s)z_{h}\neq 0$. Note that $z_{h}\in K$ as $K$ is a graded $A$-submodule. Let $T=L+Az_{h}$. Then $T$ is a graded $A$-submodule of $M$ such that $0\neq xA(y+s)T\subseteq K$, and then either $xT\subseteq K$ or $(y+s)T\subseteq K$, and hence either $xT\subseteq K$ or $yT\subseteq K$, which implies that either $xL\subseteq xT\subseteq K$ or $yL\subseteq yT\subseteq K$, which is a contradiction. Hence, $x(K:_{A_{g}}M)K=0$.
    \item Similar to (5).
    \item Suppose that $(K:_{A_{g}}M)^{2}K\neq 0$. Then there exist $r, s\in (K:_{A_{g}}M)$ and $z\in K$ such that $rsz\neq 0$, and then there exists $h\in G$ such that $rsz_{h}\neq 0$. Note that $z_{h}\in K$ as $K$ is a graded $A$-submodule. Let $T=L+Az_{h}$. Then $T$ is a graded $A$-submodule of $M$ such that $0\neq (x+r)A(y+s)T\subseteq K$ by (2) and (3), and then either $(x+r)T\subseteq K$ or $(y+s)T\subseteq K$, and hence either $xT\subseteq K$ or $yT\subseteq K$, which implies that either $xL\subseteq xT\subseteq K$ or $yL\subseteq yT\subseteq K$, which is a contradiction. Hence, $(K:_{A_{g}}M)^{2}K=0$.
\end{enumerate}
\end{proof}

\begin{prop}\label{Proposition 3.1 (1)}
Let $M$ be a graded $A$-module and $K$ be a graded classical weakly prime $A$-submodule of $M$. If there exists a graded classical triple zero $(x, y, L)$ of $K$ with $x, y\in A_{e}$, then $(K:_{A_{e}}M)^{3}\subseteq Ann_{A_{e}}(M)$.
\end{prop}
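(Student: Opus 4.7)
The plan is to reduce the statement directly to Theorem \ref{Theorem 2.5}(7). Since the hypothesis provides a graded classical triple zero $(x,y,L)$ of $K$ with $x,y\in A_{e}$, part (7) of that theorem, taken with $g=e$, applies verbatim and immediately yields
\[
(K:_{A_{e}}M)^{2}\,K \;=\; 0.
\]
This is the only nontrivial input; the remainder is a direct unravelling of definitions.

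With that identity in hand, I would prove the claim by showing the stronger statement $(K:_{A_{e}}M)^{3}M=0$. Pick arbitrary $a,b,c\in (K:_{A_{e}}M)$. By the very definition of the colon ideal, $cM\subseteq K$, so
\[
abc\,M \;\subseteq\; ab\cdot (cM) \;\subseteq\; ab\cdot K \;\subseteq\; (K:_{A_{e}}M)^{2}\,K \;=\; 0.
\]
Since $(K:_{A_{e}}M)^{3}$ is, as a two-sided ideal of $A_{e}$, the additive subgroup generated by such triple products $abc$, it follows that $(K:_{A_{e}}M)^{3}M=0$, which is precisely $(K:_{A_{e}}M)^{3}\subseteq Ann_{A_{e}}(M)$, as desired.

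There is essentially no obstacle: the heavy lifting has already been performed in proving Theorem \ref{Theorem 2.5}(7), and the present proposition is just a one-line consequence. The only minor book-keeping is to note that $(K:_{A_{e}}M)$ is a (two-sided) ideal of $A_{e}$, so that its third power is well defined and is generated by products of three colon-ideal elements; the factorisation $abc=(ab)\cdot c$ then slots each such generator into the already-known vanishing $(K:_{A_{e}}M)^{2}K=0$ via the containment $cM\subseteq K$.
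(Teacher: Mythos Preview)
Your proof is correct and follows essentially the same approach as the paper: both invoke Theorem~\ref{Theorem 2.5}(7) with $g=e$ to obtain $(K:_{A_{e}}M)^{2}K=0$, and then derive $(K:_{A_{e}}M)^{3}M=0$ via $cM\subseteq K$. The paper phrases this last step using the colon-ideal containment $(K:_{A_{e}}M)^{2}(K:_{A_{e}}M)\subseteq ((K:_{A_{e}}M)^{2}K:_{A_{e}}M)$, while you unwind it elementwise, but the content is identical.
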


\begin{proof}
    By Theorem \ref{Theorem 2.5} (7), $(K:_{A_{e}}M)^{2}K=0$. So, $(K:_{A_{e}}M)^{3}=(K:_{A_{e}}M)^{2}(K:_{A_{e}}M)\subseteq ((K:_{A_{e}}M)^{2}K:_{A_{e}}M)=(0:_{A_{e}}M)=Ann_{A_{e}}(M)$.
\end{proof}

As a consequence of Proposition \ref{Proposition 3.1 (1)}, we have the following:

\begin{cor}
Let $M$ be a faithful graded $A$-module and $K$ be a graded classical weakly prime $A$-submodule of $M$. If there exists a graded classical triple zero $(x, y, L)$ of $K$ with $x, y\in A_{e}$, then $(K:_{A_{e}}M)^{3}=0$.
\end{cor}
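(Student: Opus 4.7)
The plan is to derive this corollary as an immediate consequence of the preceding Proposition \ref{Proposition 3.1 (1)}, using only the definition of a faithful module. First I would invoke Proposition \ref{Proposition 3.1 (1)} under the stated hypotheses (namely, that $(x,y,L)$ is a graded classical triple zero of $K$ with $x,y\in A_{e}$) to conclude that
\[
(K:_{A_{e}}M)^{3}\subseteq Ann_{A_{e}}(M).
\]

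Next I would unpack what faithfulness of the graded $A$-module $M$ tells us about $Ann_{A_{e}}(M)$. By definition, $M$ being faithful means $Ann_{A}(M)=0$; since $Ann_{A_{e}}(M)=Ann_{A}(M)\cap A_{e}$, this immediately yields $Ann_{A_{e}}(M)=0$. Substituting into the containment above gives $(K:_{A_{e}}M)^{3}\subseteq 0$, and therefore $(K:_{A_{e}}M)^{3}=0$, as required.

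There is essentially no obstacle here; the corollary is a one-line deduction from Proposition \ref{Proposition 3.1 (1)} combined with the definition of a faithful module. The only minor point to note is the distinction between $Ann_{A}(M)$ and $Ann_{A_{e}}(M)$, which is resolved by the trivial observation that the latter is a subset of the former.
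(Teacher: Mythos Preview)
Your proposal is correct and matches the paper's approach: the corollary is stated immediately after Proposition \ref{Proposition 3.1 (1)} as a direct consequence, with no separate proof given. Your extra remark clarifying that $Ann_{A_{e}}(M)=Ann_{A}(M)\cap A_{e}=0$ under faithfulness is a helpful but routine observation that the paper leaves implicit.
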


Also, as a consequence of Theorem \ref{Theorem 2.5} (7), we have the following:

\begin{cor}
Let $M$ be a graded $A$-module and $K$ be a faithful graded classical weakly prime $A$-submodule of $M$. If there exists a graded classical triple zero $(x, y, L)$ of $K$ with $x, y\in A_{g}$, for some $g\in G$, then $(K:_{A_{g}}M)^{2}=0$.
\end{cor}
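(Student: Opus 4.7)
The plan is to reduce the statement directly to Theorem \ref{Theorem 2.5}(7) and then strip off the module $K$ using the faithfulness hypothesis, so the only work is to convert an annihilator statement into the desired equality.

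First, I would apply Theorem \ref{Theorem 2.5}(7) to the given graded classical triple zero $(x,y,L)$ with $x,y\in A_{g}$; this immediately supplies $(K:_{A_{g}}M)^{2}K=0$. This is the only nontrivial input: all the combinatorial work of mixing $x$, $y$, and elements of $(K:_{A_{g}}M)$ to produce nonzero triples inside $K$ was already carried out in Theorem \ref{Theorem 2.5}.

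Next, I would translate $(K:_{A_{g}}M)^{2}K=0$ into an annihilator statement. For any $r,s\in (K:_{A_{g}}M)$, the product $rs$ satisfies $rsK\subseteq (K:_{A_{g}}M)^{2}K=0$, so $rs\in Ann_{A}(K)$. Since by hypothesis $K$ is faithful, $Ann_{A}(K)=0$, and hence $rs=0$. As $(K:_{A_{g}}M)^{2}$ is additively generated by such products $rs$, I conclude $(K:_{A_{g}}M)^{2}=0$, as required.

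There is no real obstacle here; the corollary is a one-step consequence of Theorem \ref{Theorem 2.5}(7) together with the definition of a faithful submodule. The only mild point to watch is that the argument passes through products of elements of $(K:_{A_{g}}M)$ rather than through the set itself, since in the non-commutative setting $(K:_{A_{g}}M)$ need not be an ideal of $A$; but additivity of the annihilator handles this without difficulty.
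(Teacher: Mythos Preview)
Your proposal is correct and matches the paper's intended argument: the corollary is stated immediately after Theorem \ref{Theorem 2.5} with the remark that it is a consequence of part (7), and your write-up simply spells out that one-line deduction using $Ann_{A}(K)=0$. There is nothing to add.
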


Furthermore, as a consequence of Theorem \ref{Theorem 2.5} (4), we have the following:

\begin{cor}
Let $M$ be a graded $A$-module and $K$ be a graded classical weakly prime $A$-submodule of $M$. If there exists a graded classical triple zero $(x, y, L)$ of $K$ with $x, y\in A_{g}$, for some $g\in G$, and $L$ is faithful, then $(K:_{A_{g}}M)^{2}=0$.
\end{cor}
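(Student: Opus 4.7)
The plan is to read this as an immediate consequence of Theorem \ref{Theorem 2.5} (4) combined with the faithfulness hypothesis on $L$. Specifically, Theorem \ref{Theorem 2.5} (4) gives us, under the stated hypotheses on the graded classical triple zero $(x,y,L)$ with $x,y\in A_g$, the identity $(K:_{A_g}M)^{2}L=0$.

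The step that finishes the argument is the conversion of an annihilation statement about $L$ into an annihilation statement outright. By definition, $(K:_{A_g}M)^{2}L=0$ means every element of the additive subgroup $(K:_{A_g}M)^{2}$ lies in $\mathrm{Ann}_{A}(L)=(0:_{A}L)$. But $L$ being faithful means precisely that $(0:_{A}L)=0$. Hence $(K:_{A_g}M)^{2}=0$, as required.

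I do not expect any genuine obstacle here: the only substantive content was already done inside the proof of Theorem \ref{Theorem 2.5} (4), and the passage from $(K:_{A_g}M)^{2}L=0$ to $(K:_{A_g}M)^{2}=0$ is just the definition of faithfulness. The only thing to be slightly careful about is confirming that $(K:_{A_g}M)^{2}\subseteq A$ is a meaningful place to apply $\mathrm{Ann}_{A}(L)=0$; since $(K:_{A_g}M)\subseteq A_{g}\subseteq A$, its square is a subset of $A$, and the definition of faithfulness applies verbatim.
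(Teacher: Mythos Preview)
Your proposal is correct and follows exactly the approach the paper intends: the corollary is stated as an immediate consequence of Theorem~\ref{Theorem 2.5}~(4), and your passage from $(K:_{A_g}M)^{2}L=0$ to $(K:_{A_g}M)^{2}=0$ via $\mathrm{Ann}_{A}(L)=0$ is precisely the intended step.
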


Let $M$ and $S$ be two $G$-graded $A$-modules. Then $M\times S$ is a $G$-graded $A$-module by $(M\times S)_{g}=M_{g}\times S_{g}$, for all $g\in G$ \cite{Nastasescue}. Moreover, $N=K\times T$ is a graded $A$-submodule of $M\times S$ if and only if $K$ is a graded $A$-submodule of $M$ and $T$ is a graded $A$-submodule of $S$ (\cite{Saber}, Lemma 3.10 and Lemma 3.12).

\begin{thm}\label{Theorem 2.7 (1)}
Let $M$ and $S$ be two $G$-graded $A$-modules. If $K\times S$ is a graded classical weakly prime $A$-submodule of $M\times S$, then $K$ is a graded classical weakly prime $A$-submodule of $M$.
\end{thm}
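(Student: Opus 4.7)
The plan is to pull back a witness of graded classical weakly primeness from the product module $M\times S$ to $M$ by embedding everything in the first coordinate, using $\{0\}$ as the trivial graded $A$-submodule of $S$.

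First I would verify that $K \neq M$: since $K\times S$ is a proper graded submodule of $M\times S$, there is some $(m,s) \in (M\times S)\setminus (K\times S)$, and because $s\in S$ automatically, we must have $m\in M\setminus K$, giving $K\neq M$. Next, I take arbitrary $x,y\in h(A)$ and a graded $A$-submodule $L$ of $M$ with $0\neq xAyL\subseteq K$, and the goal is to show $xL\subseteq K$ or $yL\subseteq K$.

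The key move is to form $L\times \{0\}$, which is a graded $A$-submodule of $M\times S$ by the cited result of Saber (Lemma 3.10 and Lemma 3.12), since $L$ is a graded submodule of $M$ and $\{0\}$ is a graded submodule of $S$. Computing componentwise,
\[
xAy(L\times\{0\}) = (xAyL)\times \{0\} \subseteq K\times S,
\]
and this product is nonzero because $xAyL\neq 0$ in $M$. Applying the hypothesis that $K\times S$ is graded classical weakly prime to the triple $(x,y,L\times\{0\})$ yields $x(L\times\{0\})\subseteq K\times S$ or $y(L\times\{0\})\subseteq K\times S$. Reading off the first coordinate (the second is already $0$), these inclusions become exactly $xL\subseteq K$ or $yL\subseteq K$, completing the proof.

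There is no real obstacle here; the only thing that needs attention is the choice of the auxiliary submodule $\{0\}\subseteq S$ (rather than $S$ itself), because using $L\times S$ would not transport information back about $xL$ and $yL$ cleanly, and one must also be sure that the product $xAy(L\times\{0\})$ remains nonzero, which it does since the nonzero-ness lives entirely in the first coordinate.
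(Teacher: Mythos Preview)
Your proof is correct and follows essentially the same route as the paper: both embed $L$ into $M\times S$ via $L\times\{0\}$, observe that $xAy(L\times\{0\})=(xAyL)\times\{0\}$ is nonzero and contained in $K\times S$, and then project the resulting inclusion back to the first coordinate. The only addition you make is the explicit check that $K\neq M$, which the paper leaves implicit.
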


\begin{proof}
Let $x, y\in h(A)$ and $L$ be a graded $A$-submodule of $M$ such that $0\neq xAyL\subseteq K$. Then $L\times \{0\}$ is a graded $A$-submodule of $M\times S$ such that $(0, 0)\neq xAy(L\times \{0\})\subseteq K\times S$, and then either $x(L\times \{0\})\subseteq K\times S$ or $y(L\times \{0\})\subseteq K\times S$, and hence either $xL\subseteq K$ or $yL\subseteq K$. Thus $K$ is a graded classical weakly prime $A$-submodule of $M$.  
\end{proof}

\begin{thm}\label{Theorem 2.7 (2)}
Let $M$ and $S$ be two $G$-graded $A$-modules and $K\times S$ is a graded classical weakly prime $A$-submodule of $M\times S$. If $(x, y, L)$ is a graded classical triple zero of $K$, then $xAy\subseteq Ann_{A}(S)$.
\end{thm}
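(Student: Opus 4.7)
The plan is to argue by contradiction: assume $xAy \not\subseteq \mathrm{Ann}_A(S)$ and produce a graded submodule of $M \times S$ that, together with $x$ and $y$, violates either the graded classical weakly prime property of $K \times S$ or the fact that $(x,y,L)$ is a triple zero of $K$.

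First I would pick $s \in S$ with $xAys \neq 0$, and then choose a homogeneous component $s_h \in S_h$ (some $h \in G$) such that $xAys_h \neq 0$; this is possible because the $G$-decomposition is unique and the map $a \mapsto xAya$ is additive. Next I would build the candidate submodule $N = L \times As_h \subseteq M \times S$. Since $L$ is a graded $A$-submodule of $M$ and $As_h$ is a graded $A$-submodule of $S$ (being generated by the homogeneous element $s_h$), the product $N$ is a graded $A$-submodule of $M \times S$ by the Lemma 3.10/3.12 reference cited just before the theorem.

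The core computation is to evaluate $xAyN$. Because the $A$-action on $M \times S$ is coordinatewise, every element of $xAyN$ has the form $(xay'l,\; xay'a's_h)$ with $a,y',a' \in A$ and $l \in L$, and conversely every such element is produced by $xay'(0,a's_h)$. Using $xAyL = 0$ from the triple-zero hypothesis, the first coordinate collapses, giving $xAyN = \{0\} \times xAyAs_h$. Since $s_h = 1 \cdot s_h \in As_h$ and $xAys_h \neq 0$, we get $xAyAs_h \supseteq xAys_h \neq 0$, hence $xAyN \neq 0$; clearly $xAyN \subseteq K \times S$.

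Now I apply the hypothesis that $K \times S$ is a graded classical weakly prime $A$-submodule of $M \times S$: either $xN \subseteq K \times S$ or $yN \subseteq K \times S$. In either case, projecting onto the first coordinate and restricting to the subset $L \times \{0\} \subseteq N$ gives $xL \subseteq K$ or $yL \subseteq K$, contradicting the triple-zero condition on $(x,y,L)$. This contradiction forces $xAyS = 0$, i.e.\ $xAy \subseteq \mathrm{Ann}_A(S)$. The only place I anticipate needing a bit of care is verifying that $As_h$ is a \emph{graded} submodule (so that $N$ qualifies in the definition) and handling the coordinatewise computation of $xAyN$ carefully enough to conclude nontriviality from $xAys_h \neq 0$.
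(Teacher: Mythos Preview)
Your proposal is correct and follows essentially the same argument as the paper's own proof: assume $xAy\nsubseteq \mathrm{Ann}_{A}(S)$, pass to a homogeneous $s_{h}\in S$ with $xAys_{h}\neq 0$, form the graded submodule $L\times As_{h}$, and use the graded classical weakly prime hypothesis on $K\times S$ to force $xL\subseteq K$ or $yL\subseteq K$, contradicting the triple-zero assumption. Your write-up is slightly more explicit than the paper's about why $As_{h}$ is graded and why $xAy(L\times As_{h})$ is nonzero (via $xAyL=0$ on the first coordinate and $xAys_{h}\neq 0$ on the second), but the route is the same.
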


\begin{proof}
Suppose that $xAy\nsubseteq Ann_{A}(S)$. Then there exists $s\in S$ such that $xAys\neq 0$, and then there exists $g\in G$ such that $xAys_{g}\neq 0$. Now, $L\times As_{g}$ is a graded $A$-submodule of $M\times S$ such that $(0, 0)\neq xAy(L\times As_{g})\subseteq K\times S$, so either $x(L\times As_{g})\subseteq K\times S$ or $y(L\times As_{g})\subseteq K\times S$, and hence either $xL\subseteq K$ or $yL\subseteq K$, which is a contradiction. Thus $xAy\subseteq Ann_{A}(S)$.
\end{proof}

We close this section by introducing a nice result concerning graded weakly prime submodules over graded multiplication modules (Theorem \ref{51}). A graded $A$-module $M$ is said to be graded multiplication if for every graded $A$-submodule $K$ of $M$, $K=IM$, for some graded deal $I$ of $A$. In this case, it is known that $I=(K:_{A}M)$. Graded multiplication modules were first introduced and studied by Escoriza and Torrecillas in \cite{Escoriza}, and further results were obtained by several authors, see for example \cite{Bataineh Shtayat, Khaksari}.

\begin{lem}\label{50}
Let $M$ be a graded $A$-module. Then every graded maximal $A$-submodule of $M$ is graded prime. 
\end{lem}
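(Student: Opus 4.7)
The plan is to give a short direct argument using the maximality property. Let $K$ be a graded maximal $A$-submodule of $M$. Then $K$ is by definition a proper graded $A$-submodule, so to show $K$ is graded prime in the sense of the paper, I have to verify: whenever $I$ is a graded ideal of $A$ and $L$ is a graded $A$-submodule of $M$ with $IL\subseteq K$, then either $L\subseteq K$ or $I\subseteq (K:_{A}M)$.

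First I would assume $L\nsubseteq K$ and aim to show $I\subseteq (K:_{A}M)$. The key step is to form $K+L$, which is a graded $A$-submodule of $M$ (because the sum of two graded submodules is graded), and to observe that $K\subsetneq K+L\subseteq M$. By the graded maximality of $K$ this forces $K+L=M$.

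Then I would compute $IM=I(K+L)=IK+IL$. Since $K$ is an $A$-submodule we have $IK\subseteq AK\subseteq K$, and $IL\subseteq K$ by hypothesis, so $IM\subseteq K$, i.e., $I\subseteq (K:_{A}M)$, as required.

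There is no real obstacle here; the only points one needs to take care of are that $K+L$ is again graded (so that maximality actually applies to it) and that the definition of graded prime used in the paper is the ``ideal-submodule'' version rather than the elementwise version, which is why the argument goes through without any commutativity or multiplication hypothesis on $M$. The multiplication assumption introduced just before the lemma is not needed for this statement and will presumably be used only in the theorem the lemma is feeding into.
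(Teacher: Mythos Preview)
Your proof is correct and is essentially identical to the paper's own argument: assume $L\nsubseteq K$, use graded maximality to get $K+L=M$, and conclude $IM=IK+IL\subseteq K$. Your additional remarks (that $K+L$ is graded, and that the multiplication hypothesis is irrelevant here) are accurate side comments but do not change the route taken.
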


\begin{proof}
    Let $K$ be a graded maximal $A$-submodule of $M$. Suppose that $IL\subseteq K$, for some graded ideal $I$ of $A$ and some graded $A$-submodule $L$ of $M$. Assume that $L\nsubseteq K$. Since $K$ is a graded maximal $A$-submodule of $M$, $L+K=M$, and hence $IM=IL+IK\subseteq K$, which implies that $I\subseteq (K:_{A}M)$. Hence, $K$ is a graded prime $A$-submodule of $M$.
\end{proof}

\begin{thm}\label{51}
    Let $M$ be a graded multiplication $A$-module. If every proper graded $A$-submodule of $M$ is graded weakly prime, then $M$ has at most two graded maximal $A$-submodules. 
\end{thm}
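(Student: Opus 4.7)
The plan is to argue by contradiction. Suppose $M$ admits three distinct graded maximal $A$-submodules $K_{1}, K_{2}, K_{3}$. Since $M$ is graded multiplication, write $K_{i} = I_{i}M$ with $I_{i} = (K_{i}:_{A}M)$, a graded ideal of $A$. By Lemma \ref{50}, each $K_{i}$ is graded prime. Maximality and distinctness give $K_{i} + K_{j} = M$ for $i \neq j$, because $K_{i} + K_{j}$ is a graded submodule properly containing $K_{i}$.

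Next, for each pair $i \neq j$ I would look at $N_{ij} = K_{i} \cap K_{j}$, which is a proper graded submodule of $M$ and hence graded weakly prime by hypothesis. Observe that $I_{i}K_{j} = I_{i}I_{j}M$ sits inside both $K_{i}$ and $K_{j}$, hence inside $N_{ij}$. The key step is to apply the graded weakly prime property of $N_{ij}$ to the containment $I_{i}K_{j} \subseteq N_{ij}$: if $I_{i}K_{j} \neq 0$, then either $K_{j} \subseteq N_{ij} \subseteq K_{i}$, forcing $K_{i} = K_{j}$, or $I_{i} \subseteq (N_{ij}:_{A}M) \subseteq (K_{j}:_{A}M) = I_{j}$, forcing $K_{i} = I_{i}M \subseteq I_{j}M = K_{j}$. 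Both alternatives contradict the distinctness of the $K_{i}$, so I conclude $I_{i}I_{j}M = 0$ for all $i \neq j$.

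To close the argument, I would exploit the comaximality $K_{2} + K_{3} = M$. Distributivity yields
$$K_{1} = I_{1}M = I_{1}(K_{2} + K_{3}) = I_{1}K_{2} + I_{1}K_{3} = I_{1}I_{2}M + I_{1}I_{3}M = 0.$$
But $K_{1}$ is graded maximal, so the only graded $A$-submodules of $M$ are $0$ and $M$, contradicting the fact that $K_{2}$ is a proper nonzero graded submodule of $M$. Therefore $M$ has at most two graded maximal $A$-submodules. The main delicate point is verifying the identity $I_{i}K_{j} = I_{i}I_{j}M$ and the identification $(K_{j}:_{A}M) = I_{j}$ in the graded multiplication setting; once these are in hand, the rest is essentially a bookkeeping application of the graded weakly prime property to the intersections $K_{i} \cap K_{j}$.
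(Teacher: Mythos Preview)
Your argument is correct, and it reaches the contradiction by a route that differs from the paper's in how the ``zero case'' is handled. The paper fixes three distinct graded maximal submodules $X,Y,Z$, writes $X=IM$, and first uses Lemma~\ref{50} (graded primeness of the maximal submodule $Z$) to rule out $IY=0$: from $0=IY\subseteq Z$ one would get $Y\subseteq Z$ or $X=IM\subseteq Z$, contradicting distinctness. Once $IY\neq 0$, a single application of the graded weakly prime property of $X\cap Y$ forces $X\subseteq Y$ or $Y\subseteq X$. By contrast, you do \emph{not} exclude $I_iK_j=0$; instead you let the weakly prime condition on each $K_i\cap K_j$ force $I_iI_jM=0$ for all $i\neq j$, and then use comaximality $K_2+K_3=M$ to collapse $K_1=I_1M$ to $0$. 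Your approach therefore never actually invokes the content of Lemma~\ref{50} (your citation of it is superfluous), at the cost of applying the weakly prime hypothesis twice and adding the comaximality step; the paper's approach is a bit shorter but leans essentially on Lemma~\ref{50}. Both are clean; your version has the minor advantage of making the argument independent of the ``maximal $\Rightarrow$ prime'' lemma.
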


\begin{proof}
    Let $X$, $Y$ and $Z$ be three distinct graded maximal $A$-submodules of $M$. Since $M$ is graded multiplication, $X=IM$, for some graded ideal $I$ of $A$. If $IY=0$, then $IY\subseteq Z$, and since $Z$ is graded prime by Lemma \ref{50}, either $Y\subseteq Z$ or $X=IM\subseteq Z$, which is a contradiction. So, $IY\neq 0$. Now, $IY\subseteq Y$ and $IY\subseteq IM=X$, so $0\neq IY\subseteq X\bigcap Y$, and since $X\bigcap Y$ is graded weakly prime by assumption, either $Y\subseteq X\bigcap Y$ or $X=IM\subseteq X\bigcap Y$, which implies that either $Y\subseteq X$ or $X\subseteq Y$, which is a contradiction. Hence, $M$ has at most two graded maximal $A$-submodules.
\end{proof}

\begin{cor}
  Let $M$ be a graded multiplication $A$-module such that every proper graded $A$-submodule of $M$ is graded weakly prime. If $X=IM$ and $Y=JM$ are two distinct graded $A$-submodules of $M$, for some graded ideals $I, J$ of $A$, then either $X$ and $Y$ are comparable by inclusion or $IY=JX=0$. In particular, if $X$ and $Y$ are two distinct graded maximal $A$-submodules of $M$, then $IY=JX=0$.  
\end{cor}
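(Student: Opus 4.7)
The plan is to adapt the method of Theorem~\ref{51}, but with $X\cap Y$ (rather than one of the maximal submodules) playing the role of the graded weakly prime submodule. The first assertion contains the real content; the ``in particular'' clause will then fall out, since two distinct graded maximal submodules cannot be comparable.

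First I would assume that $X$ and $Y$ are not comparable by inclusion and aim to show $IY=0$ and $JX=0$. Because $X$ and $Y$ are distinct graded $A$-submodules with neither containing the other, $X\cap Y$ lies strictly below each of $X$ and $Y$; in particular $X\cap Y\neq M$, so by the standing hypothesis $X\cap Y$ is graded weakly prime. I would then observe that $IY$ is a graded $A$-submodule of $M$ with $IY\subseteq Y$ (as $IY\subseteq AY\subseteq Y$) and $IY\subseteq IM=X$, so $IY\subseteq X\cap Y$.

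The decisive step is to apply the graded weakly prime property of $X\cap Y$ to the inclusion $IY\subseteq X\cap Y$: if $IY\neq 0$, then either $Y\subseteq X\cap Y$ (forcing $Y\subseteq X$) or $I\subseteq((X\cap Y):_{A}M)$, whence $X=IM\subseteq X\cap Y\subseteq Y$. Both cases contradict non-comparability, so $IY=0$. Exchanging the roles of $(X,I)$ and $(Y,J)$ gives $JX=0$ by the same argument.

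For the ``in particular'' clause, I would just note that two distinct graded maximal $A$-submodules $X$ and $Y$ are automatically incomparable, since $X\subsetneq Y\subsetneq M$ would violate the maximality of $X$; the hypothesis of the first part is therefore met, and we conclude $IY=JX=0$. I do not expect a serious obstacle here; the only points requiring a moment's care are checking that $X\cap Y$ really is a proper graded submodule (immediate from non-comparability and $X,Y\neq M$ being implicit in the distinct-maximal case, but already clear in the general case since $X\cap Y\subsetneq X$) and that $IY$ is a graded submodule lying inside $X\cap Y$.
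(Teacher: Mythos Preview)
Your proposal is correct and follows essentially the same route as the paper: the corollary is obtained by extracting the second half of the argument in Theorem~\ref{51}, applying the graded weakly prime property of $X\cap Y$ to the inclusion $IY\subseteq X\cap Y$ exactly as you do. The paper leaves the corollary unproved, but your write-up matches the intended derivation.
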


\section{Graded Classical Weakly Prime Submodules over Duo Graded Rings}

In this section, we study graded classical weakly prime submodules over Duo graded rings. A ring $A$ is said to be a left Duo ring if every left ideal of $A$ is a two sided ideal \cite{Habeb, Marks}. It is obvious that if $A$ is a left Duo ring, then $xA\subseteq Ax$, for all $x\in A$.

\begin{thm}\label{Theorem 3.1}
Let $A$ be a left Duo ring, $M$ be a graded $A$-module and $K$ be a graded classical weakly prime $A$-submodule of $M$. If $x, y\in h(A)$ and $m\in h(M)$ such that $0\neq xym\in K$, then either $xm\in K$ or $ym\in K$.
\end{thm}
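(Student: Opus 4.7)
The plan is to turn the element-wise hypothesis $0 \neq xym \in K$ into a submodule-wise inclusion of the form required by the definition of a graded classical weakly prime submodule, namely $0 \neq xAyL \subseteq K$ for some graded $A$-submodule $L$. The natural candidate is $L = Am$: because $m$ is homogeneous, say of degree $h$, the decomposition $Am = \bigoplus_{g \in G} A_g m$ with $A_g m \subseteq M_{gh}$ shows that $L$ is indeed a graded $A$-submodule of $M$.

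The main step is verifying $xAyL \subseteq K$, and this is where the left Duo hypothesis enters. A typical generator of $xAyL = xAyAm$ is $xaybm$ with $a, b \in A$. First, since $A$ is left Duo we have $yA \subseteq Ay$, so $yb = cy$ for some $c \in A$; substituting gives $xaybm = (xac)(ym)$. Applying the Duo property a second time to $xac \in xA$, we can write $xac = a'x$ for some $a' \in A$, so $(xac)(ym) = a'(x(ym)) = a'(xym)$. Since $xym \in K$ and $K$ is an $A$-submodule of $M$, $a'(xym) \in K$. Hence every generator of $xAyL$ lies in $K$, giving $xAyL \subseteq K$. For nontriviality, $xym = x \cdot 1 \cdot y \cdot 1 \cdot m \in xAyL$ and $xym \neq 0$, so $xAyL \neq 0$.

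The hypothesis that $K$ is a graded classical weakly prime $A$-submodule now delivers $xL \subseteq K$ or $yL \subseteq K$, and since $m = 1 \cdot m \in L$, one of $xm$ or $ym$ lies in $K$. The only delicate point is the two-step application of the Duo property in the middle paragraph, which is what allows the free factors $a$ and $b$ to be absorbed into a single left multiplier of the known element $xym$.
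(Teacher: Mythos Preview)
Your proof is correct and follows essentially the same route as the paper: both take $L = Am$, use the left Duo hypothesis to show $xAyAm \subseteq A(xym) \subseteq K$, and then invoke the definition of graded classical weakly prime. The only cosmetic difference is that the paper packages your two element-level Duo rewrites ($yb = cy$, $xac = a'x$) into the single ideal identity $Axy = AxAyA$, from which $xAyA \subseteq Axy$ and hence $xAyAm \subseteq Axym \subseteq K$ follow at once.
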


\begin{proof}
Since $A$ is a left Duo ring, $Axy=AxAyA$, and then $Am$ is a graded $A$-submodule of $M$ such that $0\neq xAyAm\subseteq K$, and so either $xAm\subseteq K$ or $yAm\subseteq K$, that implies either $xm\in K$ or $ym\in K$. 
\end{proof}

Theorem \ref{Theorem 3.1} says that every graded classical weakly prime submodule of a graded module  over  a left duo ring is graded completely classical weakly prime. As a consequence of Theorem \ref{Theorem 3.1}, we have the following:

\begin{cor}\label{Corollary 3.1}
Let $A$ be a left Duo ring, $M$ be a graded $A$-module and $K$ be a graded classical weakly prime $A$-submodule of $M$. If $x, y\in h(A)$ and $m\in h(M)$ such that $xym\in K$ and $(x, y, Am)$ is not graded classical triple zero of $K$, then either $xm\in K$ or $ym\in K$.
\end{cor}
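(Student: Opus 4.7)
The plan is to parallel the argument of Theorem~\ref{Theorem 3.1}, replacing the single hypothesis $0 \neq xym$ by a case split driven by the definition of a graded classical triple zero. First I would promote the element relation $xym \in K$ to the submodule containment $xAyAm \subseteq K$: since $A$ is left Duo we have $xA \subseteq Ax$ and $yA \subseteq Ay$, whence
\[
xAyAm \;\subseteq\; AxyAm \;\subseteq\; Axym \;\subseteq\; K,
\]
using only that $K$ is closed under left multiplication by $A$. Setting $L = Am$, which is a graded $A$-submodule of $M$ because $m \in h(M)$, gives a graded $A$-submodule with $xAyL \subseteq K$.

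Next I would unfold the negated triple zero hypothesis. Since $(x,y,Am)$ is \emph{not} a graded classical triple zero of $K$, at least one of the three defining conditions $xAyAm = 0$, $xAm \not\subseteq K$, $yAm \not\subseteq K$ must fail. If $xAm \subseteq K$, then $xm = x \cdot 1 \cdot m \in xAm \subseteq K$ and we are done; the case $yAm \subseteq K$ is symmetric. In the remaining case $xAyAm \neq 0$, the defining property of a graded classical weakly prime $A$-submodule applied to $x, y \in h(A)$ and the graded submodule $L$ immediately yields $xL \subseteq K$ or $yL \subseteq K$, hence $xm \in K$ or $ym \in K$.

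I do not expect any genuine obstacle. The only step requiring care is the inclusion $xAyAm \subseteq K$, which is precisely the manipulation performed inside the proof of Theorem~\ref{Theorem 3.1} and relies essentially on the left Duo property; once that is in hand the rest is a tidy case analysis over the three ways the triple $(x,y,Am)$ can fail to be a classical triple zero. In particular, no use of Theorem~\ref{Theorem 3.1} itself is needed, because the missing nonvanishing information $xym \neq 0$ is exactly what the non-triple-zero assumption supplies (in the form $xAyAm \neq 0$) in the only case that is not handled directly.
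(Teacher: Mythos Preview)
Your argument is correct and matches the paper's intended route: the paper states the corollary as an immediate consequence of Theorem~\ref{Theorem 3.1} with no separate proof, and your write-up is exactly the natural unfolding of that---the left Duo manipulation $xAyAm \subseteq Axym \subseteq K$ from the proof of Theorem~\ref{Theorem 3.1}, followed by the three-way case split on the failure of the triple-zero condition (which is nothing other than Proposition~\ref{Proposition 2.2} specialized to $L = Am$). There is nothing to add or correct.
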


Let $M$ be an $A$-module and $K$ be an $A$-submodule of $M$. Then $K$ is said to be an $u$-submodule if whenever $K\subseteq \displaystyle\bigcup_{j=1}^{n}K_{j}$, for some $A$-submodules $K_{j}$'s of $M$, then $K\subseteq K_{j}$, for some $1\leq j\leq n$. $M$ is said to be an $u$-module if every $A$-submodule of $M$ is an $u$-submodule \cite{PHILIP}. Let $M$ be an $A$-module, $K$ be an $A$-submodule of $M$ and $r\in A$. Then $(K:_{M}r)=\{m\in M: rm\in K\}$ is an $A$-submodule of $M$ containing $K$.

\begin{thm}\label{Theorem 3.2}
Let $M$ be a graded $A$-module such that $A_{e}$ is a left Duo ring and $M_{g}$ is an $u$-module over $A_{e}$, for some $g\in G$. Suppose that $K$ is a graded $A$-submodule of $M$ such that $K_{g}\neq M_{g}$. Consider the following statements:
\begin{enumerate}
    \item $K$ is a $g$-classical weakly prime $A$-submodule of $M$.
    \item If $x, y\in A_{e}$ and $m\in M_{g}$ such that $0\neq xym\in K$, then either $xm\in K$ or $ym\in K$.
    \item For all $x, y\in A_{e}$, $(K:_{M_{g}}xy)=(0:_{M_{g}}xy)$ or $(K:_{M_{g}}xy)=(K:_{M_{g}}x)$ or $(K:_{M_{g}}xy)=(K:_{M_{g}}y)$.
    \item If $x, y\in A_{e}$ and $L$ is an $A_{e}$-submodule of $M_{g}$ such that $0\neq xyL\subseteq K$, then either $xL\subseteq K$ or $yL\subseteq K$.
    \item If $x\in A_{e}$ and $L$ is an $A_{e}$-submodule of $M_{g}$ such that $xL\nsubseteq K$, then either $(K:_{A_{e}}xL)=(0:_{A_{e}}xL)$ or $(K:_{A_{e}}xL)=(K:_{A_{e}}L)$.
    \item If $x\in A_{e}$, $I$ is an ideal of $A_{e}$ and $L$ is an $A_{e}$-submodule of $M_{g}$ such that $0\neq IxL\subseteq K$, then either $IL\subseteq K$ or $xL\subseteq K$.
    \item If $I$ is an ideal of $A_{e}$ and $L$ is an $A_{e}$-submodule of $M_{g}$ such that $IL\nsubseteq K$, then either $(K:_{A_{e}}IL)=(0:_{A_{e}}IL)$ or $(K:_{A_{e}}IL)=(K:_{A_{e}}L)$.
\end{enumerate}
Then $(1)\Rightarrow (2)\Rightarrow (3)\Rightarrow (4)\Rightarrow (5)\Rightarrow (6)\Rightarrow (7)$.
\end{thm}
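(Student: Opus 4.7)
The plan is to traverse the seven implications in order, leaning on three recurring devices: the left Duo identities $xA_{e}\subseteq A_{e}x$ (whence $A_{e}yA_{e}=A_{e}y$), the $u$-module hypothesis on $M_{g}$ (needed only at $(2)\Rightarrow(3)$ to collapse a cover by three $A_{e}$-submodules to a single one), and the elementary fact that an additive group is never the union of two proper subgroups (used at $(4)\Rightarrow(5)$ and $(6)\Rightarrow(7)$). At each stage the routine reverse inclusions among the various colon sets will be verified using left Duo.

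For $(1)\Rightarrow(2)$, given $0\neq xym\in K$, set $L=A_{e}m$; the identity $A_{e}yA_{e}=A_{e}y$ yields $xA_{e}yL=xA_{e}ym$, and the Duo identity $xa=bx$ rewrites $xaym=bxym\in A_{e}K\subseteq K$, so $xA_{e}yL\subseteq K$ while $xym$ itself witnesses nontriviality. Then $(1)$ gives $xm\in xL\subseteq K$ or $ym\in yL\subseteq K$. For $(2)\Rightarrow(3)$ I verify that $(K:_{M_{g}}xy)$, $(0:_{M_{g}}xy)$, $(K:_{M_{g}}x)$, $(K:_{M_{g}}y)$ are $A_{e}$-submodules of $M_{g}$ (left Duo supplying the $A_{e}$-closure in each case), together with the three reverse inclusions into $(K:_{M_{g}}xy)$ (for instance $(K:_{M_{g}}x)\subseteq(K:_{M_{g}}xy)$ follows by writing $xy=y'x$ via left Duo, so that $xym=y'xm\in A_{e}K\subseteq K$); statement $(2)$ then yields the cover
\[
(K:_{M_{g}}xy)\subseteq(0:_{M_{g}}xy)\cup(K:_{M_{g}}x)\cup(K:_{M_{g}}y),
\]
and the $u$-module hypothesis forces $(K:_{M_{g}}xy)$ into one of the three pieces, giving equality. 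Step $(3)\Rightarrow(4)$ is immediate: $L\subseteq(K:_{M_{g}}xy)$ together with $xyL\neq 0$ rule out the first alternative.

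For $(4)\Rightarrow(5)$ I verify the inclusions $(0:_{A_{e}}xL),(K:_{A_{e}}L)\subseteq(K:_{A_{e}}xL)$ (the second using $ax=ba$ from left Duo so that $axL=b(aL)\subseteq bK\subseteq K$), then apply $(4)$ to obtain $(K:_{A_{e}}xL)\subseteq(0:_{A_{e}}xL)\cup(K:_{A_{e}}L)$; the two-subgroup cover lemma delivers equality with one of them. Step $(5)\Rightarrow(6)$ is a direct translation: $IxL\subseteq K$ reads $I\subseteq(K:_{A_{e}}xL)$, and $(5)$ forces either $I\subseteq(0:_{A_{e}}xL)$ (ruled out by $IxL\neq 0$) or $I\subseteq(K:_{A_{e}}L)$, i.e., $IL\subseteq K$.

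The main obstacle is $(6)\Rightarrow(7)$, because the expression $aIL$ cannot be coerced into the form $I'xL$ required by $(6)$. The remedy is to invoke $(4)$ rather than $(6)$, which is legitimate since the implication chain allows use of any earlier link. After checking the reverse inclusions $(0:_{A_{e}}IL),(K:_{A_{e}}L)\subseteq(K:_{A_{e}}IL)$ (the second using $ai=s_{i}a$ from left Duo so that $a(il)=s_{i}(al)\in A_{e}K\subseteq K$), fix $a\in(K:_{A_{e}}IL)\setminus(0:_{A_{e}}IL)$ and suppose toward contradiction that $aL\nsubseteq K$. Pick $i_{0}\in I$ with $ai_{0}L\neq 0$; statement $(4)$ then forces $i_{0}L\subseteq K$. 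For any $i\in I$, applying $(4)$ to $a(i+i_{0})L=ai_{0}L\neq 0$ yields $(i+i_{0})L\subseteq K$, and hence $il=(i+i_{0})l-i_{0}l\in K$ for every $l\in L$, so $iL\subseteq K$. This forces $IL\subseteq K$, contradicting the standing hypothesis. Therefore $aL\subseteq K$, giving the cover $(K:_{A_{e}}IL)\subseteq(0:_{A_{e}}IL)\cup(K:_{A_{e}}L)$, and the two-subgroup lemma closes the argument.
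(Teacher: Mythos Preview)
Your argument tracks the paper's closely through $(1)\Rightarrow(2)\Rightarrow(3)\Rightarrow(4)\Rightarrow(5)\Rightarrow(6)$: the same use of the Duo identity $A_{e}yA_{e}=A_{e}y$ at the first step, the same three-term cover collapsed by the $u$-module hypothesis at $(2)\Rightarrow(3)$, and the same colon-set manipulations thereafter (your explicit invocation of the ``a group is not the union of two proper subgroups'' lemma is exactly what the paper's terse phrase ``the result holds as $A_{e}$ is a left Duo ring'' is hiding). The only substantive divergence is at $(6)\Rightarrow(7)$. The paper argues directly from $(6)$: for $x\in(K:_{A_{e}}IL)$ with $xIL\neq 0$, it asserts that left Duo yields $0\neq IxL\subseteq K$, and then $(6)$ gives $xL\subseteq K$. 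You instead route through $(4)$, on the grounds that $xIL$ cannot be converted to $IxL$ from $xA_{e}\subseteq A_{e}x$ alone. Your concern is reasonable: left Duo gives $xi\in A_{e}x$ and $ix\in A_{e}i$, but neither of these produces $IxL\subseteq xIL$ or the reverse, so the paper's one-line passage from $xIL$ to $IxL$ is at best underjustified. Your detour via $(4)$ is a genuine alternative that avoids this issue.

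That said, your $(6)\Rightarrow(7)$ argument has a slip. You write ``applying $(4)$ to $a(i+i_{0})L=ai_{0}L\neq 0$'' for \emph{arbitrary} $i\in I$, but the equality $a(i+i_{0})L=ai_{0}L$ only holds when $aiL=0$. You need a case split: if $aiL\neq 0$, then $(4)$ applied directly to $0\neq aiL\subseteq K$ together with $aL\nsubseteq K$ already gives $iL\subseteq K$; if $aiL=0$, then indeed $a(i+i_{0})l=ai_{0}l$ for every $l\in L$, so $a(i+i_{0})L=ai_{0}L\neq 0$ and your displayed argument goes through to yield $(i+i_{0})L\subseteq K$, whence $iL\subseteq K$. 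With this small repair your proof is complete.
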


\begin{proof}
$(1)\Rightarrow (2)$: Similar to proof of Theorem \ref{Theorem 3.1}.

$(2)\Rightarrow (3)$: Let $x, y\in A_{e}$ and $m\in (K:_{M_{g}}xy)$. Then $xym\in K$. If $xym=0$, then $m\in (0:_{M_{g}}xy)$. If $xym\neq 0$, then by (2), either $xm\in K$ or $ym\in K$, and so either $m\in (K:_{M_{g}}x)$ or $m\in (K:_{M_{g}}y)$. Hence, $(K:_{M_{g}}xy)\subseteq (0:_{M_{g}}xy)\bigcup (K:_{M_{g}}x)\bigcup(K:_{M_{g}}y)$, and then $(K:_{M_{g}}xy)\subseteq (0:_{M_{g}}xy)$ or $(K:_{M_{g}}xy)\subseteq (K:_{M_{g}}x)$ or $(K:_{M_{g}}xy)\subseteq (K:_{M_{g}}y)$ as $M_{g}$ is an $u$-module over $A_{e}$, and hence the result holds as $A_{e}$ is a left Duo ring.

$(3)\Rightarrow (4)$: Clearly, $L\subseteq (K:_{M_{g}}xy)$ and $K\nsubseteq (0:_{M_{g}}xy)$, so by (3), either $L\subseteq (K:_{M_{g}}x)$ or $L\subseteq (K:_{M_{g}}y)$, and then either $xL\subseteq K$ or $yL\subseteq K$.

$(4)\Rightarrow (5)$: Let $r\in (K:_{A_{e}}xL)$. Then $rxL\subseteq K$. If $rxL=0$, then $r\in (0:_{A_{e}}xL)$. If $rxL\neq 0$, then by (4), $rL\subseteq K$. So, $(K:_{A_{e}}xL)\subseteq (0:_{A_{e}}xL)\bigcup (K:_{A_{e}}L)$. Hence, the result holds as $A_{e}$ is a left Duo ring.

$(5)\Rightarrow (6)$: Clearly, $I\subseteq (K:_{A_{e}}xL)$ and $I\nsubseteq (0:_{A_{e}}xL)$, so by (5), either $xL\subseteq K$ or $I\subseteq (K:_{A_{e}}L)$, and then either $xL\subseteq K$ or $IL\subseteq K$.

$(6)\Rightarrow (7)$: Let $x\in (K:_{A_{e}}IL)$. Then $xIL\subseteq K$. If $xIL=0$, then $x\in (0:_{A_{e}}IL)$. If $xIL\neq 0$, then as $A_{e}$ is a left Duo ring, $0\neq IxL\subseteq K$, and then by (6), $xL\subseteq K$, and hence $x\in (K:_{A_{e}}L)$. Thus $(K:_{A_{e}}IL)\subseteq (0:_{A_{e}}IL)\bigcup (K:_{A_{e}}L)$. So, the result holds as $A_{e}$ is a left Duo ring.
\end{proof}

Let $A$ be a left Duo ring. If $A$ is a graded ring, then clearly, $Grad_{A}(I)=\{x\in A:\forall g\in G,\exists n_{g}\in\mathbb{N}\text{ s.t. }x_{g}^{n_{g}}\in I\}$ is a graded ideal of $A$ containing $I$. Evidently, if $x\in h(A)$, then $x\in Grad_{A}(I)$ if and only if $x^{n}\in I$ for some $n\in \mathbb{N}$.

\begin{prop}\label{Proposition 3.1 (2)}
Let $M$ be a graded $A$-module such that $A_{e}$ is a left Duo ring and $K$ be a graded classical weakly prime $A$-submodule of $M$. If there exists a graded classical triple zero $(x, y, L)$ of $K$ with $x, y\in A_{e}$, then $Grad_{A_{e}}(Ann_{A_{e}}(M))=Grad_{A_{e}}((K:_{A_{e}}M))$.
\end{prop}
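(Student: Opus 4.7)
The plan is to prove the two inclusions separately, and both follow essentially from bookkeeping together with Proposition \ref{Proposition 3.1 (1)}.

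For the inclusion $Grad_{A_{e}}(Ann_{A_{e}}(M))\subseteq Grad_{A_{e}}((K:_{A_{e}}M))$, I would simply observe that $Ann_{A_{e}}(M)\subseteq (K:_{A_{e}}M)$ (an element killing all of $M$ pushes $M$ into every submodule, in particular into $K$), and then note that the operator $Grad_{A_{e}}(\cdot)$ is monotone with respect to inclusion directly from its definition.

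For the reverse inclusion $Grad_{A_{e}}((K:_{A_{e}}M))\subseteq Grad_{A_{e}}(Ann_{A_{e}}(M))$, the key tool is Proposition \ref{Proposition 3.1 (1)}, which, under the present hypothesis that there is a graded classical triple zero $(x,y,L)$ of $K$ with $x,y\in A_{e}$, guarantees $(K:_{A_{e}}M)^{3}\subseteq Ann_{A_{e}}(M)$. Given $r\in Grad_{A_{e}}((K:_{A_{e}}M))$, unwinding the definition produces $n\in\mathbb{N}$ with $r^{n}\in (K:_{A_{e}}M)$; then the product $r^{n}\cdot r^{n}\cdot r^{n}=r^{3n}$ lies in $(K:_{A_{e}}M)^{3}$, hence in $Ann_{A_{e}}(M)$ by the proposition just quoted, and therefore $r\in Grad_{A_{e}}(Ann_{A_{e}}(M))$.

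The left Duo assumption on $A_{e}$ does not enter the computational step at all; its sole role, as the paper notes just before the statement, is to ensure that $Grad_{A_{e}}(\cdot)$ of an ideal is itself an ideal and that the description via powers of homogeneous components collapses, in this trivially graded case, to the familiar $\{r\in A_{e}:r^{n}\in I\text{ for some }n\}$. Consequently there is no real obstacle: once Proposition \ref{Proposition 3.1 (1)} is available, the argument is a one-line power chase, and the whole proof reduces to invoking that proposition and cubing a suitable power of $r$.
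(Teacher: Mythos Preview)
Your proof is correct and follows essentially the same route as the paper's: both establish the easy inclusion via $Ann_{A_{e}}(M)\subseteq (K:_{A_{e}}M)$ and monotonicity of $Grad_{A_{e}}$, and both obtain the reverse inclusion from Proposition~\ref{Proposition 3.1 (1)}'s conclusion $(K:_{A_{e}}M)^{3}\subseteq Ann_{A_{e}}(M)$. The only cosmetic difference is that the paper phrases the second step as $(K:_{A_{e}}M)\subseteq Grad_{A_{e}}(Ann_{A_{e}}(M))$ followed by monotonicity and idempotence of $Grad_{A_{e}}$, whereas you do the equivalent elementwise power computation $r^{n}\in (K:_{A_{e}}M)\Rightarrow r^{3n}\in Ann_{A_{e}}(M)$.
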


\begin{proof}
    By Proposition \ref{Proposition 3.1 (1)}, $(K:_{A_{e}}M)\subseteq Grad_{A_{e}}(Ann_{A_{e}}(M))$, and then $Grad_{A_{e}}((K:_{A_{e}}M))\subseteq Grad_{A_{e}}(Grad_{A_{e}}(Ann_{A_{e}}(M)))=Grad_{A_{e}}(Ann_{A_{e}}(M))$. On the other hand, since $Ann_{A_{e}}(M)\subseteq (K:_{A_{e}}M)$, $Grad_{A_{e}}(Ann_{A_{e}}(M))\subseteq Grad_{A_{e}}((K:_{A_{e}}M))$. Hence, $Grad_{A_{e}}(Ann_{A_{e}}(M))=Grad_{A_{e}}((K:_{A_{e}}M))$.
\end{proof}


\begin{thebibliography}{9}

\bibitem{Dawwas Zoubi} R. Abu-Dawwas and K. Al-Zoubi, On Graded weakly classical prime submodules, Iranian Journal of Mathematical Sciences and Informatics, 12 (1) (2017), 153-161.
    
\bibitem{Dawwas Bataineh Muanger} R. Abu-Dawwas, M. Bataineh and M. Al-Muanger, Graded prime submodules over non-commutative rings, Vietnam Journal of Mathematics, 46 (3) (2018), 681-692.

\bibitem{Alshehry Dawwas} A. S. Alshehry and R. Abu-Dawwas, Graded weakly prime ideals of non-commutative rings, Communications in Algebra, 49 (11) (2021), 4712-4723.

\bibitem{Alshehry Habeb} A. S. Alshehry, J. M. Habeb, R. Abu-Dawwas and A. Alrawabdeh, Graded weakly $2$-absorbing ideals over non-commutative graded rings, Symmetry, 14 (7) (2022), 1472; https://doi.org/10.3390/sym14071472.

\bibitem{Atani} S. E. Atani, On graded weakly prime ideals, Turkish Journal of Mathematics, 30 (2006), 351-358.

\bibitem{Bataineh Shtayat} M. Bataineh, R. Abu-Dawwas and J. Shtayat, Almost graded multiplication and almost
graded comultiplication modules, Demonstratio Mathematica, 53 (2020), 325-331.

\bibitem{Escoriza} J. Escoriza and B. Torrecillas, Multiplication objects in commutative Grothendieck category, Communications in Algebra, 26 (1998), 1867-1883.

\bibitem{Farzalipour} F. Farzalipour, P. Ghiasvand, On the union of graded prime submodules, Thai Journal of Mathematics, 9 (1) (2011), 49-55.

\bibitem{Habeb} J. M. Habeb, A note on zero commutative and duo rings, Mathematical Journal of Okayama University, 32, (1990), 73–76.

\bibitem{Hazart} R. Hazrat, Graded rings and graded Grothendieck groups, Cambridge University press, 2016.

\bibitem{Jamali} M. Jamali and R. J. Nezhad, On classical weakly prime submodules, Facta Universitatis. Series: Mathematics and Informatics, 37 (1) (2022), 17-30.

\bibitem{Khaksari} K. Khaksari and F. R. Jahromi, Multiplication graded modules, International Journal of Algebra, 7 (1) (2013), 17-24.

\bibitem{Marks} G. Marks, Duo rings and Ore extensions, Journal of Algebra, 280 (2004), 463-471.

\bibitem{Nastasescue} C. Nastasescu and F. Oystaeyen, Methods of graded rings, Lecture Notes in Mathematics, 1836, Springer-Verlag, Berlin, 2004.

\bibitem{Refai Dawwas} M. Refai and R. Abu-Dawwas, On generalizations of graded second submodules, Proyecciones Journal of Mathematics, 39 (6) (2020), 1537-1554.

\bibitem{PHILIP} P. Quartararo and H. S. Butts, Finite unions of ideals and modules, Proceedings of the American Mathematical Society, 52 (1975), 91-96.

    
\bibitem{Refai Hailat Obiedat} M. Refai, M. Hailat and S. Obiedat, Graded radicals and graded prime spectra, Far East Journal of Mathematical Sciences, (2000), 59-73.

\bibitem{Saber} H. Saber, T. Alraqad and R. Abu-Dawwas, On graded $s$-prime submodules, AIMS Mathematics, 6 (3) (2020), 2510-2524.

\end{thebibliography}
\end{document}